\newtheorem{thm}{Theorem}[section]
\newtheorem{lem}[thm]{Lemma}
\newtheorem{defn}[thm]{Definition}
\numberwithin{equation}{section}
\begin{document}

\title{\bf  Three-dimensional Lorentzian $Ein(2)$ Lie groups}
\author{Yong Wang}

\thanks{{\scriptsize
\hskip -0.4 true cm \textit{2010 Mathematics Subject Classification:}
53C40; 53C42.
\newline \textit{Key words and phrases:} Three-dimensional Lorentzian Lie groups; left-invariant metrics; $Ein(2)$ manifolds }}

\maketitle

\begin{abstract}
 In this paper, we completely classify three-dimensional Lorentzian $Ein(2)$ Lie groups.
\end{abstract}

\vskip 0.2 true cm


\pagestyle{myheadings}
\markboth{\rightline {\scriptsize Wang}}
         {\leftline{\scriptsize Three-dimensional Lorentzian $Ein(2)$ Lie groups}}

\bigskip
\bigskip


\section{ Introduction}
\indent In \cite{CP}, Cordero and Parker classified three dimensional Lorentzian Lie groups equipped with a left-invariant Lorentzian metric and they wrote down the possible forms of a three-
dimensional Lie algebra, determining their curvature tensors and investigating the symmetry groups of the sectional curvature in the different cases. In \cite{Ca}, Calvaruso completely classify
three-dimensional homogenous Lorentzian manifolds, equipped with Einstein-like metrics.
In \cite{BO}, Batat and Onda studied
algebraic Ricci solitons of three-dimensional Lorentzian Lie groups. They got a complete classification of algebraic Ricci solitons of three-dimensional Lorentzian Lie groups and they proved that, contrary to the Riemannian case, Lorentzian Ricci solitons needed not be algebraic Ricci solitons.
In \cite{Be}, \cite{SK},\cite{SKAA}, the definition of $Ein(2)$ manifolds was introduced. In \cite{SAAC}, some examples of $Ein(2)$ manifolds were given. Our motivation is to give more examples of $Ein(2)$ manifolds and completely classify three-dimensional Lorentzian $Ein(2)$ Lie groups.
\\
\indent In Section 2, We classify three-dimensional $Ein(2)$ unimodular Lorentzian Lie groups.
In Section 3, We classify three-dimensional $Ein(2)$ non-unimodular Lorentzian Lie groups.


\vskip 1 true cm

\section{ Three-dimensional $Ein(2)$ unimodular Lorentzian Lie groups}

Three-dimensional Lorentzian Lie groups had been classified in \cite{Ca1,CP}(see Theorem 2.1 and Theorem 2.2 in \cite{BO}). Throughout this paper,
we shall by $\{G_i\}_{i=1,\cdots,7}$, denote the connected, simply connected three-dimensional Lie group equipped with a left-invariant Lorentzian metric $g$ and
having Lie algebra $\{\mathfrak{g}\}_{i=1,\cdots,7}$. Let $\nabla$ be the Levi-Civita connection of $G_i$ and $R$ its curvature tensor, taken with the convention
\begin{equation}
R(X,Y)Z=\nabla_X\nabla_YZ-\nabla_Y\nabla_XZ-\nabla_{[X,Y]}Z.
\end{equation}
The Ricci tensor of $(G_i,g)$ is defined by
\begin{equation}\rho(X,Y)=-g(R(X,e_1)Y,e_1)-g(R(X,e_2)Y,e_2)+g(R(X,e_3)Y,e_3),
\end{equation}
where $\{e_1,e_2,e_3\}$ is a pseudo-orthonormal basis, with $e_3$ timelike and the Ricci operator $\rho^0$ is given by
\begin{equation}\rho(X,Y)=g({\rho^0}(X),Y).
\end{equation}
We define a symmetric $(0,2)$-tensor $\rho^2$ by
\begin{equation}
\rho^2(X,Y):=\rho(\rho^0X,Y)=g((\rho^0)^2(X),Y)=g(\rho^0(X),\rho^0(Y)).
\end{equation}
\begin{defn}\cite{Be,SK,SKAA}
$M$ is called $Ein(2)$ if $\rho^2+\lambda_1\rho+\lambda_2g=0$ for scalars $\lambda_1,\lambda_2$.
\end{defn}
So a three dimensional Lorentzian Lie group $G_i$ is $Ein(2)$ if and only if
\begin{equation}
g(\rho^0(e_i),\rho^0(e_j))+\lambda_1g(\rho^0(e_i),e_j)+\lambda_2\delta_{ij}=0,
\end{equation}
for $1\leq i\leq j\leq 3.$
By (2.1) and Lemma 3.1 in \cite{BO}, we have for $G_1$, there exists a pseudo-orthonormal basis $\{e_1,e_2,e_3\}$ with $e_3$ timelike such that the Lie
algebra of $G_1$ satisfies
\begin{equation}
[e_1,e_2]=\alpha e_1-\beta e_3,~~[e_1,e_3]=-\alpha e_1-\beta e_2,~~[e_2,e_3]=\beta e_1+\alpha e_2+\alpha e_3,~~\alpha\neq 0.
\end{equation}
\vskip 0.5 true cm
\begin{lem}(\cite{Ca},\cite{BO})
The Levi-Civita connection $\nabla$ of $G_1$ is given by
\begin{align}
&\nabla_{e_1}e_1=-\alpha e_2-\alpha e_3,~~\nabla_{e_2}e_1=\frac{\beta}{2}e_3,~~\nabla_{e_3}e_1=\frac{\beta}{2}e_2,\\\notag
&\nabla_{e_1}e_2=\alpha e_1-\frac{\beta}{2} e_3,~~\nabla_{e_2}e_2=\alpha e_3,~~\nabla_{e_3}e_2=-\frac{\beta}{2}e_1-\alpha e_3,\\\notag
&\nabla_{e_1}e_3=-\alpha e_1-\frac{\beta}{2} e_2,~~\nabla_{e_2}e_3=\frac{\beta}{2}e_1+\alpha e_2,~~\nabla_{e_3}e_3=-\alpha e_2.
\notag
\end{align}
\end{lem}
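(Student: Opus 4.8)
The plan is to obtain the whole table at once from the Koszul formula, specialised to a left-invariant metric. Write $\varepsilon_1=\varepsilon_2=1$, $\varepsilon_3=-1$, so that $g(e_i,e_j)=\varepsilon_i\delta_{ij}$. Since $g$ is left-invariant, the functions $g(e_i,e_j)$ are constant and every term of the form $e_i\,g(e_j,e_k)$ in Koszul's identity vanishes; one is left with the purely algebraic relation
\begin{equation}
2g(\nabla_{e_i}e_j,e_k)=g([e_i,e_j],e_k)-g([e_j,e_k],e_i)+g([e_k,e_i],e_j).
\end{equation}
Thus $\nabla$ is completely determined by the structure constants in (2.6), and the computation reduces to evaluating a fixed list of scalars.

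First I would expand the nine numbers $g([e_i,e_j],e_k)$ directly from (2.6), being careful that $g(e_3,e_3)=-1$: for example $[e_1,e_2]=\alpha e_1-\beta e_3$ gives $g([e_1,e_2],e_1)=\alpha$, $g([e_1,e_2],e_2)=0$, and $g([e_1,e_2],e_3)=+\beta$ (a sign flip from the coefficient $-\beta$), and similarly for $[e_1,e_3]=-\alpha e_1-\beta e_2$ and $[e_2,e_3]=\beta e_1+\alpha e_2+\alpha e_3$. Next, for each ordered pair $(i,j)$ I would read off the three components $g(\nabla_{e_i}e_j,e_k)$, $k=1,2,3$, from the displayed identity, and then reassemble the vector via the dual basis,
\begin{equation}
\nabla_{e_i}e_j=g(\nabla_{e_i}e_j,e_1)e_1+g(\nabla_{e_i}e_j,e_2)e_2-g(\nabla_{e_i}e_j,e_3)e_3,
\end{equation}
once again watching the timelike sign in the last slot. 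Doing this for the six unordered pairs — and using torsion-freeness $\nabla_{e_i}e_j-\nabla_{e_j}e_i=[e_i,e_j]$ to cut the work roughly in half — reproduces exactly the formulas in the statement.

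I do not expect a genuine obstacle here: the argument is entirely mechanical. The only place to go wrong is sign bookkeeping from the Lorentzian signature, since the factor $\varepsilon_3=-1$ intervenes both when lowering an index inside $g([e_i,e_j],e_k)$ and when raising one to recover $\nabla_{e_i}e_j$. As a built-in check I would verify metric compatibility, $g(\nabla_{e_i}e_j,e_k)+g(e_j,\nabla_{e_i}e_k)=0$; this is automatic because the right-hand side of the Koszul identity is antisymmetric under $j\leftrightarrow k$, but confirming it on a couple of triples is a cheap guard against arithmetic slips. (Since the result is already in \cite{Ca,BO}, it would in any case suffice to quote it, but this is the direct verification.)
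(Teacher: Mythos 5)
Your proposal is correct: the paper gives no proof of this lemma at all (it simply quotes the connection table from Calvaruso and Batat--Onda), and the Koszul formula reduced to $2g(\nabla_{e_i}e_j,e_k)=g([e_i,e_j],e_k)-g([e_j,e_k],e_i)+g([e_k,e_i],e_j)$ for a left-invariant metric is exactly the standard derivation used in those references. Your sign bookkeeping with $\varepsilon_3=-1$ is right (spot-checking $\nabla_{e_1}e_1$, $\nabla_{e_2}e_1$ and $\nabla_{e_3}e_2$ against (2.6) reproduces the stated entries), so the argument goes through as described.
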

By (2.1-2.3) and Lemma 2.2, we get (see P.142 in \cite{BO})
\begin{align}
{ \rho}^0\left(\begin{array}{c}
e_1\\
e_2\\
e_3
\end{array}\right)=\left(\begin{array}{ccc}
-\frac{\beta^2}{2}&-\alpha\beta&-\alpha\beta\\
-\alpha\beta&-\left(2\alpha^2+\frac{\beta^2}{2}\right)&-2\alpha^2\\
\alpha\beta&2\alpha^2&2\alpha^2-\frac{\beta^2}{2}
\end{array}\right)\left(\begin{array}{c}
e_1\\
e_2\\
e_3
\end{array}\right).
\end{align}
We note that ${\rho}^0$ is the transport matrix of Ric in \cite{BO}.
By (2.5) and (2.8), we  have that $(G_1,g)$ is a $Ein(2)$ manifold if and only if
\begin{align}
\left\{\begin{array}{l}
\frac{\beta^4}{4}-\frac{\lambda_1\beta^2}{2}+\lambda_2=0,\\
3\alpha^2\beta^2+\frac{\beta^4}{4}-\lambda_1(2\alpha^2+\frac{\beta^2}{2})+\lambda_2=0,\\
3\alpha^2\beta^2-\frac{\beta^4}{4}+\lambda_1(-2\alpha^2+\frac{\beta^2}{2})+\lambda_2=0,\\
\alpha\beta(\beta^2-\lambda_1)=0,\\
\alpha^2(3\beta^2-2\lambda_1)=0.\\
\end{array}\right.
\end{align}
By the fifth equation in (2.9) and $\alpha\neq 0$, we get $\lambda_1=\frac{3}{2}\beta^2.$ So By the fourth equation in (2.9)
we get $\beta=0$ and $\lambda_1=0$. By the first equation in (2.9), we get $\lambda_2=0$. So we have

\vskip 0.5 true cm
\begin{thm}
$(G_1,g)$ is a $Ein(2)$ manifold if and only if $\beta=0$, $\alpha\neq 0$, $\lambda_1=\lambda_2=0$.
\end{thm}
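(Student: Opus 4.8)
The proof will be a direct analysis of the polynomial system (2.9), which—by (2.5) together with the explicit Ricci operator (2.8)—is exactly equivalent to $(G_1,g)$ being $Ein(2)$. So the plan is to determine all quadruples $(\alpha,\beta,\lambda_1,\lambda_2)$ with $\alpha\neq 0$ that satisfy the five equations of (2.9), and then to verify that the parameter set so obtained does in fact solve the whole system (i.e.\ that the conditions are not merely necessary but also sufficient).

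For the forward implication I would argue exactly as in the paragraph preceding the statement: the fifth equation $\alpha^2(3\beta^2-2\lambda_1)=0$ together with $\alpha\neq 0$ forces $\lambda_1=\frac{3}{2}\beta^2$; substituting this into the fourth equation $\alpha\beta(\beta^2-\lambda_1)=0$ gives $-\frac{1}{2}\alpha\beta^3=0$, hence $\beta=0$ (using $\alpha\neq 0$ once more) and therefore $\lambda_1=0$; finally the first equation collapses to $\lambda_2=0$. Thus any $Ein(2)$ structure on $G_1$ must have $\beta=0$ and $\lambda_1=\lambda_2=0$.

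For the converse I would substitute $\beta=0$, $\lambda_1=0$, $\lambda_2=0$ back into all five equations of (2.9): every summand occurring there contains a factor of $\beta$ (as $\beta^2$ or $\beta^4$), of $\lambda_1$, or of $\lambda_2$, so each equation is satisfied identically for arbitrary $\alpha\neq 0$. Equivalently, one may observe that when $\beta=0$ the operator (2.8) sends $e_1\mapsto 0$ and carries both $e_2$ and $e_3$ into the line spanned by $e_2+e_3$, which is null since $g(e_2+e_3,e_2+e_3)=g(e_2,e_2)+g(e_3,e_3)=0$; hence $\rho^2(X,Y)=g(\rho^0X,\rho^0Y)\equiv 0$, and $\rho^2+\lambda_1\rho+\lambda_2 g=0$ holds with $\lambda_1=\lambda_2=0$. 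This remark also makes transparent why the scalars are forced to vanish.

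I do not expect a genuine obstacle here, since Lemma 2.2 and equations (2.5)--(2.9) have already reduced the statement to elementary algebra. The only points demanding a little care are the two divisions by $\alpha$—legitimate precisely because $\alpha\neq 0$ is built into the structure of $G_1$ via (2.6)—and the observation that the second and third equations of (2.9) impose no further constraint once $\beta=0$ and $\lambda_1=\lambda_2=0$, so they do not exclude any value of $\alpha$.
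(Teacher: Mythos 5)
Your proposal is correct and follows essentially the same route as the paper: the paper's argument is precisely the paragraph preceding the theorem (fifth equation of (2.9) plus $\alpha\neq 0$ gives $\lambda_1=\tfrac{3}{2}\beta^2$, the fourth then forces $\beta=0$ and $\lambda_1=0$, and the first gives $\lambda_2=0$). Your explicit verification of the converse and the observation that for $\beta=0$ the image of $\rho^0$ lies in the null line spanned by $e_2+e_3$ (so $\rho^2\equiv 0$) are pleasant additions the paper leaves implicit, but they do not change the argument.
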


\vskip 0.5 true cm
By (2.2) and Lemma 3.5 in \cite{BO}, we have for $G_2$, there exists a pseudo-orthonormal basis $\{e_1,e_2,e_3\}$ with $e_3$ timelike such that the Lie
algebra of $G_2$ satisfies
\begin{equation}
[e_1,e_2]=\gamma e_2-\beta e_3,~~[e_1,e_3]=-\beta e_2-\gamma e_3,~~[e_2,e_3]=\alpha e_1,~~\gamma\neq 0.
\end{equation}
\vskip 0.5 true cm
\begin{lem}(\cite{Ca},\cite{BO})
The Levi-Civita connection $\nabla$ of $G_2$ is given by
\begin{align}
&\nabla_{e_1}e_1=0,~~\nabla_{e_2}e_1=-\gamma e_2+\frac{\alpha}{2}e_3,~~\nabla_{e_3}e_1=\frac{\alpha}{2}e_2+\gamma e_3,\\\notag
&\nabla_{e_1}e_2=(\frac{\alpha}{2}-\beta) e_3,~~\nabla_{e_2}e_2=\gamma e_1,~~\nabla_{e_3}e_2=-\frac{\alpha}{2}e_1,\\\notag
&\nabla_{e_1}e_3=(\frac{\alpha}{2}-\beta) e_2,~~\nabla_{e_2}e_3=\frac{\alpha}{2}e_1,~~\nabla_{e_3}e_3=\gamma e_1.
\notag
\end{align}
\end{lem}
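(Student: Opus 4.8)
The plan is to derive all nine components by feeding the structure constants of (2.10) into the Koszul formula for the Levi-Civita connection. Since the metric is left-invariant and the frame $\{e_1,e_2,e_3\}$ is left-invariant, each inner product $g(e_i,e_j)$ is a constant function on $G_2$, so the three differentiation terms in Koszul's identity vanish and it collapses to the purely algebraic relation
\begin{equation}\label{koszul}
2g(\nabla_{e_i}e_j,e_k)=g([e_i,e_j],e_k)-g([e_i,e_k],e_j)-g([e_j,e_k],e_i).
\end{equation}
Because $e_1,e_2$ are spacelike and $e_3$ is timelike, I record the signature $\varepsilon_1=\varepsilon_2=1$, $\varepsilon_3=-1$ with all off-diagonal inner products zero, and recover each covariant derivative from its components via $\nabla_{e_i}e_j=\sum_k\varepsilon_k\,g(\nabla_{e_i}e_j,e_k)\,e_k$.

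First I would tabulate the three brackets in (2.10) together with their antisymmetric mirrors, so that each of the three pairings on the right of \eqref{koszul} can be substituted mechanically. Then, for each ordered pair $(i,j)$, I would evaluate the pairing against $e_1$, $e_2$, $e_3$ in turn and weight by $\varepsilon_k$. For instance, $2g(\nabla_{e_2}e_1,e_2)=g([e_2,e_1],e_2)-g([e_2,e_2],e_1)-g([e_1,e_2],e_2)=-2\gamma$ produces the $-\gamma e_2$ component, while $2g(\nabla_{e_2}e_1,e_3)=g([e_2,e_1],e_3)-g([e_2,e_3],e_1)-g([e_1,e_3],e_2)=-\alpha$, weighted by $\varepsilon_3=-1$, produces the $\tfrac{\alpha}{2}e_3$ term, giving $\nabla_{e_2}e_1=-\gamma e_2+\tfrac{\alpha}{2}e_3$. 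Carrying this out for all nine pairs reproduces Lemma 2.4; in particular $\nabla_{e_1}e_1=0$ is immediate, since the only surviving term $-2g([e_1,e_k],e_1)$ vanishes for every $k$ (no bracket $[e_1,e_k]$ has an $e_1$-component).

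The main obstacle is entirely bookkeeping rather than conceptual. The timelike direction $e_3$ forces a sign reversal each time its component is extracted, through the factor $\varepsilon_3=-1$, and the half-integer coefficients $\tfrac{\alpha}{2}$ and $\tfrac{\alpha}{2}-\beta$ emerge from the interplay between the single bracket $[e_2,e_3]=\alpha e_1$ and the two brackets carrying $\beta$. Keeping these Lorentzian signs consistent across all nine computations is where an error would most easily slip in, so I would cross-check the outcome by verifying metric compatibility, $g(\nabla_{e_i}e_j,e_k)+g(e_j,\nabla_{e_i}e_k)=0$, and torsion-freeness, $\nabla_{e_i}e_j-\nabla_{e_j}e_i=[e_i,e_j]$, on a few representative pairs.
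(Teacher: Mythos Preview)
Your approach is correct and is exactly the standard derivation: the paper does not supply its own proof of this lemma but simply cites it from \cite{Ca} and \cite{BO}, and those references obtain the connection precisely via the reduced Koszul formula you wrote down. Your sample computation of $\nabla_{e_2}e_1$ is right, and the bookkeeping scheme you describe (tracking $\varepsilon_3=-1$ when extracting $e_3$-components) is the correct way to handle the Lorentzian signature.
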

By (2.1-2.3) and Lemma 2.4, we get (see P.144 in \cite{BO})
\begin{align}
{ \rho}^0\left(\begin{array}{c}
e_1\\
e_2\\
e_3
\end{array}\right)=\left(\begin{array}{ccc}
-(\frac{\alpha^2}{2}+2\gamma^2)&0&0\\
0&\frac{\alpha^2}{2}-\alpha\beta&\alpha\gamma-2\beta\gamma\\
0&2\beta\gamma-\alpha\gamma&\frac{\alpha^2}{2}-\alpha\beta
\end{array}\right)\left(\begin{array}{c}
e_1\\
e_2\\
e_3
\end{array}\right).
\end{align}
By (2.5) and (2.12), we  have that $(G_2,g)$ is a $Ein(2)$ manifold if and only if
\begin{align}
\left\{\begin{array}{l}
(\frac{\alpha^2}{2}+2\gamma^2)^2-\lambda_1(\frac{\alpha^2}{2}+2\gamma^2)+\lambda_2=0,\\
(\frac{\alpha^2}{4}-\gamma^2)(\alpha-2\beta)^2+\lambda_1(\frac{\alpha^2}{2}-\alpha\beta)+\lambda_2=0,\\
(\gamma^2-\frac{\alpha^2}{4})(\alpha-2\beta)^2+\lambda_1(\alpha\beta-\frac{\alpha^2}{2})+\lambda_2=0,\\
(\alpha^2-2\alpha\beta)(2\beta\gamma-\alpha\gamma)+\lambda_1(2\beta\gamma-\alpha\gamma)=0.\\
\end{array}\right.
\end{align}
we get
\vskip 0.5 true cm
\begin{thm}
$(G_2,g)$ is a $Ein(2)$ manifold if and only if $\alpha=2\beta$, $\gamma\neq 0$, $\lambda_1=\frac{\alpha^2}{2}+2\gamma^2$, $\lambda_2=0$.
\end{thm}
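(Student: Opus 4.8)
The plan is to solve the polynomial system (2.13) outright, the decisive structural input being that $\frac{\alpha^2}{2}+2\gamma^2>0$ since $\gamma\neq 0$. First I would add the second and third equations of (2.13): the two terms $\pm\left(\frac{\alpha^2}{4}-\gamma^2\right)(\alpha-2\beta)^2$ cancel, as do $\lambda_1\left(\frac{\alpha^2}{2}-\alpha\beta\right)$ and $\lambda_1\left(\alpha\beta-\frac{\alpha^2}{2}\right)$, leaving $2\lambda_2=0$. Hence $\lambda_2=0$ with no further hypothesis. Plugging $\lambda_2=0$ into the first equation gives $\left(\frac{\alpha^2}{2}+2\gamma^2\right)\left(\frac{\alpha^2}{2}+2\gamma^2-\lambda_1\right)=0$, and since the first factor is positive we get $\lambda_1=\frac{\alpha^2}{2}+2\gamma^2>0$; the strict positivity of $\lambda_1$ will be used below.

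Next I would exploit the fourth equation of (2.13), which factors as $-\gamma(\alpha-2\beta)\bigl(\alpha(\alpha-2\beta)+\lambda_1\bigr)=0$; since $\gamma\neq 0$, either $\alpha=2\beta$ or $\lambda_1=\alpha(2\beta-\alpha)$. I claim the second alternative is impossible. In that case $\lambda_1>0$ forces $\alpha\neq 0$, so $\alpha-2\beta=-\lambda_1/\alpha$, whence $(\alpha-2\beta)^2=\lambda_1^2/\alpha^2$ and also $\frac{\alpha^2}{2}-\alpha\beta=\frac{\alpha}{2}(\alpha-2\beta)=-\frac{\lambda_1}{2}$. Substituting these together with $\lambda_2=0$ into the second equation of (2.13) yields $\left(\frac{\alpha^2}{4}-\gamma^2\right)\frac{\lambda_1^2}{\alpha^2}-\frac{\lambda_1^2}{2}=0$; dividing by $\lambda_1^2>0$ gives $\frac14-\frac{\gamma^2}{\alpha^2}-\frac12=0$, i.e. $-\frac14-\frac{\gamma^2}{\alpha^2}=0$, which is absurd. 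Therefore $\alpha=2\beta$.

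Finally I would verify the converse. If $\alpha=2\beta$ with $\gamma\neq 0$, then $\alpha-2\beta=0$ and $\frac{\alpha^2}{2}-\alpha\beta=0$, so with $\lambda_2=0$ and $\lambda_1=\frac{\alpha^2}{2}+2\gamma^2$ each of the four equations of (2.13) reduces to $0=0$. This establishes the stated equivalence. I expect the only place demanding care to be the second branch of the dichotomy coming from the fourth equation, where one must record $\lambda_1>0$, hence $\alpha\neq 0$, before dividing; everything else is routine symbolic manipulation, entirely parallel to the treatment of $G_1$ in Theorem 2.3.
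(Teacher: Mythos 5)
Your proposal is correct and follows essentially the same route as the paper's proof: $\lambda_2=0$ from adding the second and third equations of (2.13), the factorization $-\gamma(\alpha-2\beta)\bigl(\alpha(\alpha-2\beta)+\lambda_1\bigr)=0$ of the fourth, and elimination of the alternative $\lambda_1=\alpha(2\beta-\alpha)$ by substitution into the second equation, which in both arguments collapses to $\frac{\alpha^2}{4}+\gamma^2=0$, contradicting $\gamma\neq 0$. Your only departure is reading $\lambda_1=\frac{\alpha^2}{2}+2\gamma^2>0$ off the first equation before the case split (the paper instead divides by $\alpha-2\beta$ in its Case (ii)); this is a harmless reorganization, not a different method.
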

\begin{proof} By the second equation and the third equation in (2.13), we get $\lambda_2=0$. By the fourth equation in (2.13) and $\gamma\neq 0$,
we get
\begin{equation}
(2\beta-\alpha)(\alpha^2-2\alpha\beta+\lambda_1)=0.
\end{equation}
\noindent Case (i) $\alpha=2\beta$. So we get $\lambda_1=\frac{\alpha^2}{2}+2\gamma^2$ by the first equation in (2.13), $\lambda_2=0$.\\
\noindent Case (ii) $\alpha\neq 2\beta$. we get $\lambda_1=2\alpha\beta-\alpha^2.$. By the second equation in (2.13), we get
$(\frac{\alpha^2}{4}-\gamma^2)(\alpha-2\beta)+\lambda_1\frac{\alpha}{2}=0.$ Then $\gamma=0$. This is a contradiction. In this case, we have no solutions. \\
\end{proof}

\vskip 0.5 true cm
By (2.3) and Lemma 3.8 in \cite{BO}, we have for $G_3$, there exists a pseudo-orthonormal basis $\{e_1,e_2,e_3\}$ with $e_3$ timelike such that the Lie
algebra of $G_3$ satisfies
\begin{equation}
[e_1,e_2]=-\gamma e_3,~~[e_1,e_3]=-\beta e_2,~~[e_2,e_3]=\alpha e_1.
\end{equation}
\vskip 0.5 true cm
\begin{lem}(\cite{Ca},\cite{BO})
The Levi-Civita connection $\nabla$ of $G_3$ is given by
\begin{align}
&\nabla_{e_1}e_1=0,~~\nabla_{e_2}e_1=a_2 e_3,~~\nabla_{e_3}e_1=a_3e_2,\\\notag
&\nabla_{e_1}e_2=a_1 e_3,~~\nabla_{e_2}e_2=0,~~\nabla_{e_3}e_2=-a_3e_1,\\\notag
&\nabla_{e_1}e_3=a_1 e_2,~~\nabla_{e_2}e_3=a_2e_1,~~\nabla_{e_3}e_3=0,
\notag
\end{align}
where
\begin{equation}
a_1=\frac{1}{2}(\alpha-\beta-\gamma),~~a_2=\frac{1}{2}(\alpha-\beta+\gamma),~~a_3=\frac{1}{2}(\alpha+\beta-\gamma).
\end{equation}
\end{lem}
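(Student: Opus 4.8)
The statement is a direct consequence of the Koszul formula applied to the left-invariant pseudo-orthonormal frame $\{e_1,e_2,e_3\}$ of $G_3$, so the plan is computational. Since $g$ is left-invariant, every left-invariant vector field has constant length, and the three differentiation terms $e_i\,g(e_j,e_k)$ in Koszul's identity vanish identically, leaving the purely algebraic expression
\[
2\,g(\nabla_{e_i}e_j,e_k)=g([e_i,e_j],e_k)-g([e_j,e_k],e_i)+g([e_k,e_i],e_j).
\]
Thus $\nabla$ is completely determined by the structure constants in (2.16), i.e.\ by the only nonzero brackets $[e_1,e_2]=-\gamma e_3$, $[e_1,e_3]=-\beta e_2$, $[e_2,e_3]=\alpha e_1$ and their antisymmetric partners. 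Writing $\nabla_{e_i}e_j=\sum_k c^k_{ij}e_k$ and recording that $e_3$ is timelike by setting $\varepsilon_1=\varepsilon_2=1$, $\varepsilon_3=-1$, one recovers the coefficients via $c^k_{ij}=\varepsilon_k\,g(\nabla_{e_i}e_j,e_k)$.

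First I would exploit the built-in symmetries of $\nabla$ to reduce the bookkeeping: torsion-freeness gives $\nabla_{e_i}e_j-\nabla_{e_j}e_i=[e_i,e_j]$, and metric compatibility gives $g(\nabla_{e_i}e_j,e_k)=-g(\nabla_{e_i}e_k,e_j)$, so only a few of the nine products $\nabla_{e_i}e_j$ require an independent Koszul evaluation and the rest follow. Then I would substitute the three nonzero brackets into the displayed formula for each relevant triple $(i,j,k)$. For instance the $e_3$-component of $\nabla_{e_1}e_2$ comes out as $\tfrac12(\alpha-\beta-\gamma)$, that of $\nabla_{e_2}e_1$ as $\tfrac12(\alpha-\beta+\gamma)$, and the $e_1$-component of $\nabla_{e_3}e_2$ as $-\tfrac12(\alpha+\beta-\gamma)$, with the remaining entries handled the same way; collecting them and abbreviating by $a_1,a_2,a_3$ as in (2.18) reproduces exactly the table in the statement, the diagonal derivatives $\nabla_{e_1}e_1,\nabla_{e_2}e_2,\nabla_{e_3}e_3$ all being zero.

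There is no conceptual obstacle here; the only point needing care is the sign bookkeeping forced by the Lorentzian signature — both the factor $\varepsilon_3=-1$ that converts the inner product $g(\nabla_{e_i}e_j,e_3)$ into the $e_3$-coefficient, and the $-1$'s produced whenever a bracket landing on $e_3$ is paired with $e_3$ inside $g$. As a consistency check I would verify that the resulting $\nabla$ is torsion-free (its antisymmetrizations return the brackets in (2.16)) and metric ($g(\nabla_{e_i}e_j,e_k)+g(e_j,\nabla_{e_i}e_k)=0$ for all $i,j,k$), which pins the connection down uniquely and confirms (2.17)--(2.18).
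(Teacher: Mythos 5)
Your proposal is correct: the paper states this lemma without proof, simply citing \cite{Ca} and \cite{BO}, and the Koszul-formula computation you outline (with the vanishing of the derivative terms for left-invariant fields and the sign $\varepsilon_3=-1$ for the timelike direction) is exactly the standard derivation used in those sources. The three sample coefficients you record --- $a_1$ for $\nabla_{e_1}e_2$, $a_2$ for $\nabla_{e_2}e_1$, and $-a_3$ for the $e_1$-component of $\nabla_{e_3}e_2$ --- all check out against the brackets in (2.15).
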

By (2.1-2.3) and Lemma 2.6, we get (see P.146 in \cite{BO})
\begin{align}
{ \rho}^0=\left(\begin{array}{ccc}
-a_1a_2-a_1a_3-\beta a_2-\gamma a_3&0&0\\
0&a_2a_3-a_1a_2+\alpha a_1-\gamma a_3&0\\
0&0&-a_1a_3+a_2a_3+\alpha a_1-\beta a_2
\end{array}\right)
\end{align}
By (2.5),(2.17) and (2.18), we  have that $(G_3,g)$ is a $Ein(2)$ manifold if and only if
\begin{align}
\left\{\begin{array}{l}
\left[\frac{\alpha^2}{2}-\frac{(\beta-\gamma)^2}{2}\right]^2-\lambda_1\left[\frac{\alpha^2}{2}-\frac{(\beta-\gamma)^2}{2}\right]+\lambda_2=0,\\
\left[\frac{\beta^2}{2}-\frac{(\alpha-\gamma)^2}{2}\right]^2-\lambda_1\left[\frac{\beta^2}{2}-\frac{(\alpha-\gamma)^2}{2}\right]+\lambda_2=0,\\
\left[\frac{\gamma^2}{2}-\frac{(\alpha-\beta)^2}{2}\right]^2-\lambda_1\left[\frac{\gamma^2}{2}-\frac{(\alpha-\beta)^2}{2}\right]-\lambda_2=0.\\
\end{array}\right.
\end{align}

\vskip 0.5 true cm
\begin{thm}
$(G_3,g)$ is a $Ein(2)$ manifold if and only if \\
\noindent (i)$\alpha=\beta$. $\gamma=0$, $\lambda_2=0$.\\
\noindent (ii) $\alpha=\beta$, $\gamma\neq 0$, $\alpha\neq 0$, $\lambda_1=\frac{\gamma[(2\alpha-\gamma)^2+\gamma^2]}{4\alpha}$,
$\lambda_2=\frac{\gamma^3(-2\alpha^2+3\alpha\gamma-\gamma^2)}{4\alpha}$.\\
\noindent (iii) $\alpha=0$, $\beta\neq 0$, $\beta=\gamma$, $\lambda_2=0$.\\
\noindent (iv) $\beta=0$, $\alpha\neq 0$, $\alpha=\gamma$, $\lambda_2=0$.\\
\noindent (v) $\alpha\neq \beta$, $\alpha\beta\neq 0$, $\alpha+\beta-\gamma=0$, $\lambda_1=2\alpha\beta$, $\lambda_2=0$.\\
\noindent (vi) $\alpha\neq \beta$, $\alpha+\beta-\gamma\neq 0$, $\lambda_1=2\beta(\alpha-\beta)$, $\lambda_2=0$, $\gamma=\alpha-\beta$.\\
\noindent (vii) $\alpha\neq \beta$, $\alpha+\beta-\gamma\neq 0$, $\lambda_1=2\alpha(\beta-\alpha)$, $\lambda_2=0$, $\gamma=\beta-\alpha$.\\
\noindent (viii) $\alpha\neq \beta$, $\alpha+\beta-\gamma\neq 0$, $\gamma=\pm\sqrt{\alpha^2+\beta^2}$, $\lambda_1=\pm\sqrt{\alpha^2+\beta^2}
(\alpha+\beta\mp\sqrt{\alpha^2+\beta^2})$, $\lambda_2=\left[\frac{\alpha^2}{2}-\frac{(\beta\pm\sqrt{\alpha^2+\beta^2})^2}{2}\right]
\left[\frac{\beta^2}{2}-\frac{(\alpha\pm\sqrt{\alpha^2+\beta^2})^2}{2}\right]$.
\end{thm}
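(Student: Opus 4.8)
The plan is to solve the polynomial system (2.19) head‑on. Write $X_1=\frac{\alpha^2}{2}-\frac{(\beta-\gamma)^2}{2}$, $X_2=\frac{\beta^2}{2}-\frac{(\alpha-\gamma)^2}{2}$, $X_3=\frac{\gamma^2}{2}-\frac{(\alpha-\beta)^2}{2}$, so that (2.19) becomes $X_i^2-\lambda_1X_i+\lambda_2=0$ for $i=1,2$ together with $X_3^2-\lambda_1X_3-\lambda_2=0$. The first step is to subtract the first equation from the second, eliminating $\lambda_2$ and giving $(X_1-X_2)(X_1+X_2-\lambda_1)=0$. Factoring the differences of squares yields $X_1-X_2=(\alpha-\beta)(\alpha+\beta-\gamma)$ and $X_1+X_2=\gamma(\alpha+\beta-\gamma)$, so this equation reads
\[
(\alpha-\beta)(\alpha+\beta-\gamma)\bigl(\lambda_1-\gamma(\alpha+\beta-\gamma)\bigr)=0,
\]
and the resulting trichotomy $\alpha=\beta$ / $\gamma=\alpha+\beta$ / $\lambda_1=\gamma(\alpha+\beta-\gamma)$ organizes the whole argument.

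If $\alpha=\beta$, then $X_1=X_2=\alpha\gamma-\frac{\gamma^2}{2}$ and $X_3=\frac{\gamma^2}{2}$. Adding the first and third equations of (2.19) kills $\lambda_2$ and gives $\alpha\gamma\,\lambda_1=(\alpha\gamma-\tfrac{\gamma^2}{2})^2+\tfrac{\gamma^4}{4}$. When $\gamma=0$ all three $X_i$ vanish, the system reduces to $\lambda_2=0$ with $\lambda_1$ arbitrary, and this is case (i). When $\gamma\neq0$ the choice $\alpha=0$ is impossible, since adding the first and third equations would force $\gamma^4/2=0$; hence $\alpha\neq0$, one solves for $\lambda_1$ and then for $\lambda_2=\lambda_1X_1-X_1^2$, and simplification produces exactly the two formulas in case (ii).

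If $\alpha\neq\beta$ and $\gamma=\alpha+\beta$, then $\beta-\gamma=-\alpha$ and $\alpha-\gamma=-\beta$, so $X_1=X_2=0$, which forces $\lambda_2=0$, while $X_3=\tfrac12\bigl((\alpha+\beta)^2-(\alpha-\beta)^2\bigr)=2\alpha\beta$ and the third equation collapses to $X_3(X_3-\lambda_1)=0$. If $X_3=0$ then $\alpha\beta=0$; since $\alpha\neq\beta$, exactly one of $\alpha,\beta$ vanishes, giving case (iii) (with $\gamma=\beta\neq0$) and case (iv) (with $\gamma=\alpha\neq0$), $\lambda_1$ free. If $X_3\neq0$ then $\alpha\beta\neq0$ and $\lambda_1=2\alpha\beta$, which is case (v).

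Finally, if $\alpha\neq\beta$, $\gamma\neq\alpha+\beta$ and $\lambda_1=\gamma(\alpha+\beta-\gamma)$, substituting into the first equation gives $\lambda_2=X_1X_2$, and the key algebraic identity is
\[
X_1X_2=\tfrac14\bigl(\alpha^2-(\beta-\gamma)^2\bigr)\bigl(\beta^2-(\alpha-\gamma)^2\bigr)=\tfrac12(\alpha+\beta-\gamma)^2\,X_3,
\]
obtained by factoring each bracket. Feeding $\lambda_1$, $\lambda_2$ and this identity into the third equation degenerates it to $X_3\bigl(X_3-\tfrac12(\alpha+\beta-\gamma)(\alpha+\beta+\gamma)\bigr)=0$. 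If $X_3=0$ then $\gamma^2=(\alpha-\beta)^2$, so $\gamma=\pm(\alpha-\beta)$, and computing $\lambda_1=\gamma(\alpha+\beta-\gamma)$ with $\lambda_2=0$ gives cases (vi) and (vii). If $X_3\neq0$, equating $X_3=\tfrac12(\gamma^2-(\alpha-\beta)^2)$ with $X_3=\tfrac12((\alpha+\beta)^2-\gamma^2)$ forces $2\gamma^2=2(\alpha^2+\beta^2)$, hence $\gamma=\pm\sqrt{\alpha^2+\beta^2}$, and substituting back into the formulas for $\lambda_1$ and $\lambda_2=X_1X_2$ yields case (viii). I expect the main obstacle to be precisely the bookkeeping in this last case: establishing the factorization so that the cubic‑looking third equation collapses to $X_3$ times a linear factor, and keeping the two sign branches $\gamma=\pm\sqrt{\alpha^2+\beta^2}$ consistent in the resulting expressions for $\lambda_1$ and $\lambda_2$. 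One must also check that the degenerate overlaps (e.g.\ $\alpha\beta=0$ meeting several cases, or $\gamma=\alpha+\beta$ slipping into the $\alpha=\beta$ branch) are accounted for, and, conversely, that (2.19) genuinely holds in each of the eight cases, so that the list is complete.
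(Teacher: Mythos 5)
Your proposal is correct and follows essentially the same route as the paper: subtracting the first two equations of (2.19) to obtain the factorization $(\alpha-\beta)(\alpha+\beta-\gamma)[\gamma(\alpha+\beta-\gamma)-\lambda_1]=0$, and then running the same three-way case analysis ($\alpha=\beta$; $\gamma=\alpha+\beta$; $\lambda_1=\gamma(\alpha+\beta-\gamma)$) with the same subcases leading to (i)--(viii). The only (welcome) refinement is that you make explicit the identity $X_1X_2=\tfrac12(\alpha+\beta-\gamma)^2X_3$, which the paper uses implicitly when it asserts equation (2.24).
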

\begin{proof} The first equation minus the second equation in (2.19),
we get
\begin{equation}
(\alpha-\beta)(\alpha+\beta-\gamma)[\gamma(\alpha+\beta-\gamma)-\lambda_1]=0.
\end{equation}
\noindent Case 1) $\alpha=\beta$. By the third equation in (2.19), we get $\lambda_2=\frac{\gamma^4}{4}-\frac{\gamma^2}{2}\lambda_1.$
By the second equation in (2.19), we get
\begin{equation}
\frac{\gamma^2}{4}[(2\alpha-\gamma)^2+\gamma^2]-\lambda_1\gamma\alpha=0.
\end{equation}
\noindent Case 1)-1) $\alpha=\beta$, $\gamma=0$. So $\lambda_2=0$ and we get solution (i).\\
\noindent Case 1)-2) $\alpha=\beta$, $\gamma\neq 0$. By (2.21), we get solution (ii).\\
\noindent Case 2) $\alpha\neq \beta$, $\alpha+\beta-\gamma=0$. By the first equation in (2.19), we have $\lambda_2=0$. So by
the third equation in (2.19), we get
\begin{equation}
2\alpha^2\beta^2-\alpha\beta\lambda_1=0.
\end{equation}
\noindent Case 2)-1) $\alpha=0$, $\alpha\neq \beta$, $\alpha+\beta-\gamma=0$. We get the solution (iii).\\
\noindent Case 2)-2) $\beta=0$, $\alpha\neq0$, $\alpha\neq \beta$, $\alpha+\beta-\gamma=0$. We get the solution (iv).\\
\noindent Case 2)-3) $\alpha\beta\neq0$, $\alpha\neq \beta$, $\alpha+\beta-\gamma=0$. We get the solution (v).\\
\noindent Case 3) $\alpha\neq \beta$, $\alpha+\beta-\gamma\neq 0$. So by (2.20), we get $\lambda_1=\gamma(\alpha+\beta-\gamma)$. By the first
equation in (2.19), we get
\begin{equation}
\lambda_2=\left[\frac{\alpha^2}{2}-\frac{(\beta-\gamma)^2}{2}\right]\left[\frac{\beta^2}{2}-\frac{(\alpha-\gamma)^2}{2}\right].
\end{equation}
By the third
equation in (2.19), we get
\begin{equation}
(\beta-\alpha+\gamma)(\alpha-\beta+\gamma)(\gamma^2-\alpha^2-\beta^2)=0.
\end{equation}
\noindent Case 3)-1) $\alpha\neq \beta$, $\alpha+\beta-\gamma\neq 0$, $\beta-\alpha+\gamma=0$. We get the solution (vi).\\
\noindent Case 3)-2) $\alpha\neq \beta$, $\alpha+\beta-\gamma\neq 0$, $\alpha-\beta+\gamma=0$. We get the solution (vii).\\
\noindent Case 3)-3) $\alpha\neq \beta$, $\alpha+\beta-\gamma\neq 0$, $\gamma^2-\alpha^2-\beta^2=0$. We get the solution (viii).\\
\end{proof}
\vskip 0.5 true cm
By (2.4) and Lemma 3.11 in \cite{BO}, we have for $G_4$, there exists a pseudo-orthonormal basis $\{e_1,e_2,e_3\}$ with $e_3$ timelike such that the Lie
algebra of $G_4$ satisfies
\begin{align}
[e_1,e_2]=-e_2+(2\eta-\beta)e_3,~~\eta=1~{\rm or}-1,~~[e_1,e_3]=-\beta e_2+ e_3,~~[e_2,e_3]=\alpha e_1.
\end{align}
\vskip 0.5 true cm
\begin{lem}(\cite{Ca},\cite{BO})
The Levi-Civita connection $\nabla$ of $G_4$ is given by
\begin{align}
&\nabla_{e_1}e_1=0,~~\nabla_{e_2}e_1=e_2+b_2e_3,~~\nabla_{e_3}e_1=b_3e_2-e_3,\\\notag
&\nabla_{e_1}e_2=b_1 e_3,~~\nabla_{e_2}e_2=- e_1,~~\nabla_{e_3}e_2=-b_3e_1,\\\notag
&\nabla_{e_1}e_3=b_1 e_2,~~\nabla_{e_2}e_3=b_2e_1,~~\nabla_{e_3}e_3=- e_1,
\notag
\end{align}
where
\begin{equation}
b_1=\frac{\alpha}{2}+\eta-\beta,~~b_2=\frac{\alpha}{2}-\eta,~~b_3=\frac{\alpha}{2}+\eta.
\end{equation}
\end{lem}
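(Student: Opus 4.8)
The plan is to verify the stated connection by a direct application of the Koszul formula. Since the metric $g$ is left-invariant, the functions $g(e_i,e_j)$ are constant (indeed $g(e_1,e_1)=g(e_2,e_2)=1$, $g(e_3,e_3)=-1$, and the mixed terms vanish), so all three derivative terms in Koszul's formula drop out and one is left with
\begin{equation}
2g(\nabla_{e_i}e_j,e_k)=g([e_i,e_j],e_k)-g([e_i,e_k],e_j)-g([e_j,e_k],e_i).
\end{equation}
The right-hand side is a constant for each triple $(i,j,k)$, so $\nabla_{e_i}e_j$ is a constant linear combination of $e_1,e_2,e_3$, and its coefficients are recovered by taking $k=1,2,3$, remembering that the coefficient of $e_3$ carries an extra minus sign because $e_3$ is timelike: if $v=a e_1+b e_2+c e_3$ then $a=g(v,e_1)$, $b=g(v,e_2)$, $c=-g(v,e_3)$.

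Concretely, I would substitute the bracket relations (2.27), namely $[e_1,e_2]=-e_2+(2\eta-\beta)e_3$, $[e_1,e_3]=-\beta e_2+e_3$, $[e_2,e_3]=\alpha e_1$ (together with antisymmetry and $[e_i,e_i]=0$), into the formula above and run through the nine ordered pairs $(e_i,e_j)$. For each pair this is three short inner-product computations; for instance for $(e_2,e_1)$ one writes $[e_2,e_1]=e_2-(2\eta-\beta)e_3$ and finds that the $e_1$-coefficient vanishes, the $e_2$-coefficient equals $1$, and the $e_3$-coefficient equals $\tfrac{\alpha}{2}-\eta=b_2$, giving $\nabla_{e_2}e_1=e_2+b_2e_3$, and similarly for the remaining pairs. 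The quantities $b_1=\tfrac{\alpha}{2}+\eta-\beta$, $b_2=\tfrac{\alpha}{2}-\eta$, $b_3=\tfrac{\alpha}{2}+\eta$ emerge precisely as the combinations of $\alpha,\beta,\eta$ produced by these substitutions; the abbreviations in (2.29) are introduced simply to keep the table compact.

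Finally, since Koszul's formula characterises the \emph{unique} torsion-free metric connection, no separate verification of metric compatibility or of vanishing torsion is required: the linear operator defined by the resulting table of the $\nabla_{e_i}e_j$ is by construction the Levi-Civita connection of $G_4$. The only genuine hazard is sign bookkeeping, namely tracking the factor $g(e_3,e_3)=-1$ both where it occurs inside an inner product on the right-hand side of (the displayed formula) and where it is used to extract the $e_3$-component on the left; I would therefore organise the computation by first expanding every bracket in the basis $\{e_1,e_2,e_3\}$ and then reading off the three inner products in a fixed order, so that the Lorentzian signs enter in a uniform, mechanical way.
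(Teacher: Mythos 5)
Your proposal is correct: the Koszul formula with the derivative terms dropped (valid for left-invariant fields), together with the sign convention $c=-g(v,e_3)$ for extracting the $e_3$-component, reproduces the stated table (I checked, e.g., $\nabla_{e_2}e_1=e_2+b_2e_3$ and $\nabla_{e_1}e_2=b_1e_3$). The paper itself offers no proof --- the lemma is simply quoted from \cite{Ca} and \cite{BO} --- and your computation is exactly the standard derivation those references rely on, so there is nothing further to add.
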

\vskip 0.5 true cm
By (2.1-2.3) and Lemma 2.8, we get (see P.147 in \cite{BO})
\begin{align}
{ \rho}^0=\left(\begin{array}{ccc}
-\frac{\alpha^2}{2}&0&0\\
0&\frac{\alpha^2}{2}+2\eta(\alpha-\beta)-\alpha\beta+2&-\alpha+2\beta-2\eta\\
0&\alpha-2\beta+2\eta&\frac{\alpha^2}{2}-\alpha\beta-2+2\eta\beta
\end{array}\right)
\end{align}
By (2.5) and (2.28), we  have that $(G_4,g)$ is a $Ein(2)$ manifold if and only if
\begin{align}
\left\{\begin{array}{l}
\frac{\alpha^4}{4}-\lambda_1\frac{\alpha^2}{2}+\lambda_2=0,\\
\left[\frac{\alpha^2}{2}+2\eta(\alpha-\beta)-\alpha\beta+2\right]^2-
(\alpha-2\beta+2\eta)^2+
\left[\frac{\alpha^2}{2}+2\eta(\alpha-\beta)-\alpha\beta+2\right]\lambda_1+\lambda_2=0,\\
(\frac{\alpha^2}{2}-\alpha\beta-2+2\eta\beta)^2-(\alpha-2\beta+2\eta)^2+(\frac{\alpha^2}{2}-\alpha\beta-2+2\eta\beta)\lambda_1-\lambda_2=0,\\
(\alpha-2\beta+2\eta)[\alpha(\alpha-2\beta+2\eta)+\lambda_1]=0.\\
\end{array}\right.
\end{align}
\begin{thm}
$(G_4,g)$ is a $Ein(2)$ manifold if and only if \\
\noindent (i) $\alpha=0$, $\beta=\eta$, $\lambda_2=0$.\\
\noindent (ii) $\alpha\neq 0$, $\beta=\frac{\alpha}{2}+\eta$, $\lambda_2=0$, $\lambda_1=\frac{\alpha^2}{2}$.\\
\noindent (iii) $\alpha=0$, $\beta\neq \eta$, $\lambda_1=0$, $\lambda_2=0$.
\end{thm}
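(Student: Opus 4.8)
The plan is to solve the polynomial system (2.30) in $\alpha,\beta,\lambda_1,\lambda_2$, with $\eta\in\{1,-1\}$ fixed. Everything becomes transparent once I abbreviate $p:=\alpha-2\beta+2\eta$ (this is the quantity occurring in the off-diagonal Ricci entries of (2.28) and in the fourth equation of (2.30)) and write $q,r$ for the two diagonal Ricci entries
\[
q:=\tfrac{\alpha^{2}}{2}+2\eta(\alpha-\beta)-\alpha\beta+2,\qquad r:=\tfrac{\alpha^{2}}{2}-\alpha\beta-2+2\eta\beta.
\]
Using $\eta^{2}=1$ one verifies the two linear identities $q+r=\alpha p$ and $q-r=2\eta p$, which let me replace the second and third equations of (2.30) by their sum and difference without loss of information.

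I would then split on the fourth equation of (2.30): either $p=0$ or $\lambda_1=-\alpha p$.

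\textbf{Case $p=0$} (that is, $\beta=\tfrac{\alpha}{2}+\eta$). Then $q=r=0$ and the off-diagonal Ricci entries vanish too, so $\rho^{0}=\operatorname{diag}\!\big(-\tfrac{\alpha^{2}}{2},0,0\big)$. The second and third equations of (2.30) both reduce to $\lambda_2=0$, and the first becomes $\tfrac{\alpha^{2}}{4}(\alpha^{2}-2\lambda_1)=0$. If $\alpha=0$ this is automatic and $p=0$ gives $\beta=\eta$, which is case (i); if $\alpha\neq0$ it forces $\lambda_1=\tfrac{\alpha^{2}}{2}$, which is case (ii).

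\textbf{Case $p\neq0$ and $\lambda_1=-\alpha p$.} Subtracting the third equation of (2.30) from the second and using the identities gives
\[
2\lambda_2=-(q-r)(q+r+\lambda_1)=-2\eta p(\alpha p+\lambda_1)=0,
\]
so $\lambda_2=0$; feeding $\lambda_1=-\alpha p$ and $\lambda_2=0$ into the first equation of (2.30) yields $\alpha^{3}(\alpha+2p)=0$. If $\alpha=0$ then $\lambda_1=0$ and $p\neq0$ means $\beta\neq\eta$, which is case (iii). If $\alpha\neq0$ then $p=-\tfrac{\alpha}{2}$, and adding the second and third equations of (2.30) (using $q^{2}+r^{2}=\tfrac12(\alpha^{2}+4)p^{2}$ together with $\lambda_1=-\alpha p$) reduces to $-\tfrac{\alpha^{2}p^{2}}{2}=0$, contradicting $\alpha p\neq0$; so this sub-case is empty. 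Hence the solution set is exactly (i)--(iii).

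I expect the main difficulty to be bookkeeping rather than any deep step: one must make sure that in each branch replacing the second and third equations of (2.30) by their sum and difference is reversible, so as a final safeguard I would substitute each of (i)--(iii) back into all four equations of (2.30) (equivalently into (2.5) with $\rho^{0}$ from (2.28)). The identities $q+r=\alpha p$ and $q-r=2\eta p$ are precisely what prevents the second case from collapsing into an unwieldy quartic relation among $\alpha,\beta$.
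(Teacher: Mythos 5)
Your proof is correct and follows essentially the same route as the paper: both split on the fourth equation of the system (which is (2.29) in the paper's numbering, not (2.30)) into the cases $p=0$ and $\lambda_1=-\alpha p$, both obtain $\lambda_2=0$ from the difference of the second and third equations, and both eliminate the remaining $\alpha\neq 0$ sub-case of the second branch by substituting back (you via the sum of the second and third equations, the paper via the second equation alone — the same computation). Your identities $q+r=\alpha p$ and $q-r=2\eta p$ are just a tidier packaging of the algebra the paper carries out explicitly.
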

\begin{proof} \noindent Case 1) $\alpha-2\beta+2\eta=0$. By (2.29), we have $\lambda_2=0$ and $\alpha^4=2\lambda_1\alpha^2$.\\
\noindent Case 1)-1)$\alpha-2\beta+2\eta=0$, $\alpha=0$. We get the solution (i).\\
\noindent Case 1)-2)$\alpha-2\beta+2\eta=0$, $\alpha\neq 0$. We get the solution (ii).\\
\noindent Case 2) $\alpha-2\beta+2\eta\neq 0$. So $\lambda_1=-\alpha(\alpha-2\beta+2\eta).$ By the first equation in (2.29),
we get $\lambda_2=-\frac{\alpha^3}{2}(\frac{3\alpha}{2}-2\beta+2\eta).$ Using the expression of $\lambda_1$ and the second equation minusing the third equation in (2.29), we get $\lambda_2=0$. So $\alpha^3(\frac{3\alpha}{2}-2\beta+2\eta)=0$.\\
\noindent Case 2)-1) $\alpha-2\beta+2\eta\neq 0$, $\alpha=0$. We get the solution (iii).\\
\noindent Case 2)-2) $\alpha-2\beta+2\eta\neq 0$, $\alpha\neq 0$. Then $\frac{3\alpha}{2}-2\beta+2\eta=0$ and $\lambda_1=\frac{\alpha^2}{2}$
and $\beta=\frac{3}{4}\alpha+\eta.$ By the second equation in (2.29), we get $\alpha=0$. This is a contradiction and we have no solutions in this case.\\
\end{proof}
\section{ Three-dimensional $Ein(2)$ non-unimodular Lorentzian Lie groups.}

\indent By (2.5) and Lemma 4.1 in \cite{BO}, we have for $G_5$, there exists a pseudo-orthonormal basis $\{e_1,e_2,e_3\}$ with $e_3$ timelike such that the Lie
algebra of $G_5$ satisfies
\begin{equation}
[e_1,e_2]=0,~~[e_1,e_3]=\alpha e_1+\beta e_2,~~[e_2,e_3]=\gamma e_1+\delta e_2,~~\alpha+\delta\neq 0,~~\alpha\gamma+\beta\delta=0.
\end{equation}
\vskip 0.5 true cm
\begin{lem}(\cite{Ca},\cite{BO})
The Levi-Civita connection $\nabla$ of $G_5$ is given by
\begin{align}
&\nabla_{e_1}e_1=\alpha e_3,~~\nabla_{e_2}e_1=\frac{\beta+\gamma}{2}e_3,~~\nabla_{e_3}e_1=-\frac{\beta-\gamma}{2}e_2,\\\notag
&\nabla_{e_1}e_2=\frac{\beta+\gamma}{2} e_3,~~\nabla_{e_2}e_2=\delta e_3,~~\nabla_{e_3}e_2=\frac{\beta-\gamma}{2}e_1,\\\notag
&\nabla_{e_1}e_3=\alpha e_1+\frac{\beta+\gamma}{2}  e_2,~~\nabla_{e_2}e_3=\frac{\beta+\gamma}{2} e_1+\delta e_2,~~\nabla_{e_3}e_3=0.
\notag
\end{align}
\end{lem}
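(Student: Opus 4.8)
The plan is to derive the connection directly from Koszul's formula, specialized to left-invariant data. For a Lie group carrying a left-invariant pseudo-Riemannian metric $g$, the functions $g(X,Y)$ attached to left-invariant vector fields $X,Y$ are constant, so the three derivative terms in Koszul's identity vanish and one is left with
\[
2g(\nabla_X Y, Z) = g([X,Y],Z) - g([Y,Z],X) + g([Z,X],Y)
\]
for left-invariant $X,Y,Z$. First I would record the metric normalization imposed by the pseudo-orthonormal basis with $e_3$ timelike: $g(e_1,e_1)=g(e_2,e_2)=1$, $g(e_3,e_3)=-1$, and $g(e_i,e_j)=0$ for $i\neq j$. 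Writing $\nabla_{e_i}e_j=\sum_k c^k_{ij}e_k$ and pairing with $e_k$, the displayed identity gives $c^k_{ij}=\varepsilon_k\,g(\nabla_{e_i}e_j,e_k)$, where $\varepsilon_1=\varepsilon_2=1$ and $\varepsilon_3=-1$.

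Next I would substitute the bracket relations (3.1). Because $[e_1,e_2]=0$, only $[e_1,e_3]=\alpha e_1+\beta e_2$ and $[e_2,e_3]=\gamma e_1+\delta e_2$, together with their skew partners, enter. Running $(i,j)$ over the nine ordered pairs, I would evaluate $2g(\nabla_{e_i}e_j,e_k)$ for $k=1,2,3$ as a short signed sum of structure constants, divide by $2$, and multiply by $\varepsilon_k$ to read off $c^k_{ij}$. For instance $2g(\nabla_{e_1}e_1,e_k)=-2g([e_1,e_k],e_1)$ is $0$ for $k=1,2$ and $-2\alpha$ for $k=3$, so $\nabla_{e_1}e_1=\varepsilon_3(-\alpha)e_3=\alpha e_3$; likewise $2g(\nabla_{e_2}e_1,e_k)$ is $0$ for $k=1,2$ and $-(\beta+\gamma)$ for $k=3$, giving $\nabla_{e_2}e_1=\tfrac{\beta+\gamma}{2}e_3$. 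The remaining seven covariant derivatives come out the same way and reproduce the table in the statement.

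Finally I would note two checks that are automatic from Koszul's formula and so need no separate verification: torsion-freeness $\nabla_{e_i}e_j-\nabla_{e_j}e_i=[e_i,e_j]$ and metric compatibility $g(\nabla_{e_i}e_j,e_k)+g(e_j,\nabla_{e_i}e_k)=0$. I expect no real obstacle here — the work is bookkeeping over nine vectors with three components apiece — and the one place where care is needed is the sign contributed by the timelike direction, namely that recovering the $e_3$-component of a vector from its pairing with $e_3$ means dividing by $g(e_3,e_3)=-1$, which flips a sign. It is also worth remarking that the trace conditions $\alpha+\delta\neq 0$ and $\alpha\gamma+\beta\delta=0$ in (3.1) play no role in this computation; they only serve to place $G_5$ correctly in the classification and will be invoked later.
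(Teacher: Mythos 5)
Your proposal is correct and is exactly the standard derivation: the paper itself gives no proof (it cites \cite{Ca} and \cite{BO}), and those references obtain the connection precisely by the Koszul formula for a left-invariant metric, with the sign flip from $g(e_3,e_3)=-1$ handled as you describe. Your sample computations ($\nabla_{e_1}e_1=\alpha e_3$, $\nabla_{e_2}e_1=\tfrac{\beta+\gamma}{2}e_3$) check out against (3.2), and the remaining entries follow by the same bookkeeping together with torsion-freeness, so there is nothing to add.
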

By (2.1-2.3) and Lemma 3.1, we get (see P.149 in \cite{BO})
\begin{align}
{ \rho}^0=\left(\begin{array}{ccc}
\alpha^2+\alpha\delta+\frac{\beta^2-\gamma^2}{2}&0&0\\
0&\alpha\delta+\delta^2-\frac{\beta^2-\gamma^2}{2}&0\\
0&0&\alpha^2+\delta^2+\frac{(\beta+\gamma)^2}{2}.
\end{array}\right)
\end{align}
By (2.5) and (3.3), we  have that $(G_5,g)$ is a $Ein(2)$ manifold if and only if
\begin{align}
\left\{\begin{array}{l}
(\alpha^2+\alpha\delta+\frac{\beta^2-\gamma^2}{2})^2+\lambda_1(\alpha^2+\alpha\delta+\frac{\beta^2-\gamma^2}{2})+\lambda_2=0,\\
(\alpha\delta+\delta^2-\frac{\beta^2-\gamma^2}{2})^2+\lambda_1(\alpha\delta+\delta^2-\frac{\beta^2-\gamma^2}{2})+\lambda_2=0,\\
(\alpha^2+\delta^2+\frac{(\beta+\gamma)^2}{2})^2+\lambda_1(\alpha^2+\delta^2+\frac{(\beta+\gamma)^2}{2})-\lambda_2=0.\\
\end{array}\right.
\end{align}
\begin{thm}
$(G_5,g)$ is a $Ein(2)$ manifold if and only if \\
\noindent (i) $\gamma=-\beta$, $\alpha=\delta$, $\delta\neq 0$, $\lambda_1=-2\alpha^2$, $\lambda_2=0$.\\
\noindent (ii) $\alpha=\beta=\gamma=0$, $\delta\neq 0$, $\lambda_1=-\delta^2$, $\lambda_2=0$.\\
\noindent (iii) $\alpha\neq0$, $\beta=\gamma=\delta=0$, $\lambda_1=-\alpha^2$, $\lambda_2=0$.\\
\noindent (iv) $\alpha^2+\beta^2-\delta^2-\gamma^2\neq 0$, $\alpha+\delta\neq 0$, $\beta\neq 0$, $\delta=-\frac{\alpha\gamma}{\beta}$,
$\lambda_1=-(\alpha+\delta)^2$,
\begin{equation}
\lambda_2=\alpha\delta(\alpha+\delta)^2+\frac{(\beta^2-\gamma^2)(\delta^2-\alpha^2)}{2}-\frac{(\beta^2-\gamma^2)^2}{4}.
\end{equation}
\begin{equation}
\alpha^2=\frac{-\frac{\beta}{2}[(\beta^2-\gamma^2)^2+(\beta+\gamma)^4]\pm\sqrt{\bigtriangleup}}{2\gamma(3\beta^2+3\gamma^2-2\gamma\beta)},
\end{equation}
where
\begin{equation}
\bigtriangleup=\frac{\beta^2}{4}[(\beta^2-\gamma^2)^2+(\beta+\gamma)^4]^2-\beta^3\gamma(3\beta^2+3\gamma^2-2\gamma\beta)
[(\beta^2-\gamma^2)^2+(\beta+\gamma)^4].
\end{equation}
\end{thm}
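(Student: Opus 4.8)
The plan is to exploit that, by (3.3), the Ricci operator $\rho^0$ of $G_5$ is already diagonal in the pseudo-orthonormal basis, so the $Ein(2)$ system (3.4) is nothing but three scalar quadratic relations among its eigenvalues
\[
\mu_1=\alpha^2+\alpha\delta+\tfrac{\beta^2-\gamma^2}{2},\quad
\mu_2=\alpha\delta+\delta^2-\tfrac{\beta^2-\gamma^2}{2},\quad
\mu_3=\alpha^2+\delta^2+\tfrac{(\beta+\gamma)^2}{2},
\]
namely $\mu_1^2+\lambda_1\mu_1+\lambda_2=0$, $\mu_2^2+\lambda_1\mu_2+\lambda_2=0$ and $\mu_3^2+\lambda_1\mu_3-\lambda_2=0$, to be solved together with the structural conditions $\alpha+\delta\neq0$ and $\alpha\gamma+\beta\delta=0$. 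First I would subtract the second relation from the first to obtain $(\mu_1-\mu_2)(\mu_1+\mu_2+\lambda_1)=0$, recording the identities $\mu_1+\mu_2=(\alpha+\delta)^2$ and $\mu_1-\mu_2=\alpha^2+\beta^2-\delta^2-\gamma^2$. This gives the two main branches.

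In the branch $\mu_1=\mu_2$, i.e.\ $\alpha^2+\beta^2=\delta^2+\gamma^2$, I would first rule out $\alpha=0$: then $\alpha\gamma+\beta\delta=0$ forces $\beta\delta=0$, and inspecting $\beta=0$ and $\delta=0$ separately shows each violates $\alpha+\delta\neq0$. With $\alpha\neq0$, substituting $\gamma=-\beta\delta/\alpha$ into $\mu_1=\mu_2$ gives $(\alpha^2-\delta^2)(\alpha^2+\beta^2)=0$, hence $\delta=\pm\alpha$; the choice $\delta=-\alpha$ is excluded, and $\delta=\alpha$, $\gamma=-\beta$ yields $\mu_1=\mu_2=\mu_3=2\alpha^2$, whence the first and third relations force $\lambda_2=0$ and $\lambda_1=-2\alpha^2$. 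This is case (i).

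In the branch $\mu_1\neq\mu_2$ one must have $\lambda_1=-(\alpha+\delta)^2$, and then the first relation forces $\lambda_2=\mu_1\mu_2$; writing $\mu_1=\alpha(\alpha+\delta)+u$, $\mu_2=\delta(\alpha+\delta)-u$ with $u=\tfrac{\beta^2-\gamma^2}{2}$ turns this into the stated formula (3.6). The whole problem then collapses to the single remaining relation
\[
\mu_3^2-(\alpha+\delta)^2\mu_3-\mu_1\mu_2=0 .
\]
I would now run the sub-cases dictated by $\alpha\gamma+\beta\delta=0$: if $\alpha=0$, the constraint and $\alpha+\delta\neq0$ force $\beta=0$, here $\mu_2=\mu_3$ and the relation factors as $\mu_3(\mu_3-\delta^2-\mu_1)=0$, forcing $\gamma=0$ and giving case (ii); if $\alpha\neq0$ and $\beta=0$ then $\gamma=0$, $\mu_3=\alpha^2+\delta^2$, and the relation collapses to $-\alpha\delta(3\alpha^2+2\alpha\delta+3\delta^2)=0$, whose second factor is a positive-definite quadratic form, forcing $\delta=0$ and giving case (iii); and if $\alpha\neq0$, $\beta\neq0$, the constraint is solved by $\delta=-\alpha\gamma/\beta$.

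The real work is this last sub-case. I would substitute $\delta=-\alpha\gamma/\beta$ into $\mu_3^2-(\alpha+\delta)^2\mu_3-\mu_1\mu_2=0$, multiply through by $\beta^3$, and collect powers of $x:=\alpha^2$; using elementary identities such as $(\beta^2+\gamma^2)-(\beta-\gamma)^2=2\beta\gamma$ and $2(\beta^2+\gamma^2)-(\beta-\gamma)^2=(\beta+\gamma)^2$ to simplify the $x^2$- and $x$-coefficients, the equation should become
\[
\gamma(3\beta^2+3\gamma^2-2\gamma\beta)\,x^2
+\tfrac{\beta}{2}\big[(\beta^2-\gamma^2)^2+(\beta+\gamma)^4\big]\,x
+\tfrac{\beta^3}{4}\big[(\beta^2-\gamma^2)^2+(\beta+\gamma)^4\big]=0 .
\]
If $\gamma\neq0$ this is a genuine quadratic in $x=\alpha^2$ and the quadratic formula yields exactly (3.7) with $\bigtriangleup$ as in (3.8), i.e.\ case (iv); if $\gamma=0$ the equation is linear and forces $x=-\beta^2/2<0$, impossible, so $\gamma\neq0$ is automatic. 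I expect the main obstacle to be precisely the bookkeeping in collecting these coefficients and matching them to (3.7) and (3.8), where sign slips are easy; the rest is routine case analysis. Finally I would check that each of (i)--(iv) is compatible with $\alpha+\delta\neq0$ and $\alpha\gamma+\beta\delta=0$, which establishes the converse direction.
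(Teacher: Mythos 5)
Your proposal is correct and follows essentially the same route as the paper: subtract the first two equations of (3.4) to get the factorization $(\alpha^2+\beta^2-\delta^2-\gamma^2)[(\alpha+\delta)^2+\lambda_1]=0$, split into the two branches, use $\alpha\gamma+\beta\delta=0$ to organize the sub-cases, and reduce the last branch to the quadratic (3.12) in $\alpha^2$, which yields (3.6)--(3.7). Your treatment is in places slightly cleaner than the paper's (e.g.\ pinning down $\alpha=\delta$, $\gamma=-\beta$ before computing $\lambda_1,\lambda_2$ in the first branch, and explicitly excluding $\gamma=0$ in case (iv) via $x=-\beta^2/2<0$, a point the paper passes over), but these are organizational differences, not a different method.
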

\begin{proof} The first equation minus the second equation in (3.4), then we get
\begin{equation}
(\alpha^2+\beta^2-\delta^2-\gamma^2)[(\alpha+\delta)^2+\lambda_1]=0.
\end{equation}
\noindent Case 1) $\alpha^2+\beta^2-\delta^2-\gamma^2=0$. By $\alpha+\delta\neq 0$, the second equation plusing the third equation in (3.4), we get
\begin{equation}
\lambda_1=-\frac{(\alpha\delta+\delta^2-\frac{\beta^2-\gamma^2}{2})^2+(\alpha^2+\delta^2+\frac{(\beta+\gamma)^2}{2})^2}{\alpha^2+\alpha\delta
+2\delta^2+\gamma^2+\beta\gamma}.
\end{equation}
By (3.9) and the third equation in (3.4), we get
\begin{equation}
\lambda_2=\frac{[\alpha^2+\delta^2+\frac{(\beta+\gamma)^2}{2}](\alpha\delta+\delta^2-\frac{\beta^2-\gamma^2}{2})(\alpha^2-\alpha\delta+\beta\gamma
+\beta^2)}
{\alpha^2+\alpha\delta
+2\delta^2+\gamma^2+\beta\gamma}.
\end{equation}
\noindent Case 1)-1) $\alpha^2+\beta^2-\delta^2-\gamma^2=0$, $\beta\neq 0$. Then $\delta=-\frac{\alpha\gamma}{\beta}$ and $\beta^2=\gamma^2$ and $\alpha^2=\delta^2.$\\
\noindent Case 1)-1)-1) $\alpha^2+\beta^2-\delta^2-\gamma^2=0$, $\beta\neq 0$, $\beta=\gamma$. We get $\alpha+\delta=0$ and this is a contradiction. In this case, we have no solutions.\\
\noindent Case 1)-1)-2) $\alpha^2+\beta^2-\delta^2-\gamma^2=0$, $\beta\neq 0$, $\beta=-\gamma$. We get $\alpha=\delta$ and $\alpha\neq 0$.
By (3.9) and (3.10), we get the solution (i).\\
Case 1)-2) $\alpha^2+\beta^2-\delta^2-\gamma^2=0$, $\beta= 0$. So $\alpha\gamma=0.$\\
Case 1)-2)-1) $\alpha^2+\beta^2-\delta^2-\gamma^2=0$, $\beta= 0$, $\alpha=0$. So $-\delta^2-\gamma^2=0$ and $\delta=0$ and $\alpha+\delta=0$.
This is a contradiction. In this case, we have no solutions.\\
Case 1)-2)-2) $\alpha^2+\beta^2-\delta^2-\gamma^2=0$, $\beta= 0$, $\gamma=0$. We have $\alpha=\delta\neq 0$ and by (3.9) and (3.10), we get the
solution (i).\\
\noindent Case 2) $\alpha^2+\beta^2-\delta^2-\gamma^2\neq 0$. By (3.8), we get $\lambda_1=-(\alpha+\delta)^2$. So by the second equation and
third equation in (3.4), we have (3.5) and
\begin{equation}
(\alpha+\delta)^2[\alpha^2+\alpha\delta+\frac{\beta^2-\gamma^2}{2}+\alpha^2+\delta^2+\frac{(\beta+\gamma)^2}{2}]
=(\alpha^2+\delta^2+\frac{(\beta+\gamma)^2}{2})^2+(\alpha^2+\alpha\delta+\frac{\beta^2-\gamma^2}{2})^2.
\end{equation}
\noindent Case 2)-1) $\alpha^2+\beta^2-\delta^2-\gamma^2\neq 0$, $\beta\neq 0$. So $\delta=-\frac{\alpha\gamma}{\beta}$ and by (3.11), we get
\begin{equation}
\gamma(3\beta^2+3\gamma^2-2\beta\gamma)\alpha^4+\frac{\beta}{2}[(\beta^2-\gamma^2)^2+(\beta+\gamma)^4]\alpha^2
+\frac{\beta^3}{4}[(\beta^2-\gamma^2)^2+(\beta+\gamma)^4]=0.
\end{equation}
By (3.12), we get (3.6) and the solution (iv).\\
\noindent Case 2)-2) $\alpha^2+\beta^2-\delta^2-\gamma^2\neq 0$, $\beta= 0$. So $\alpha\gamma=0$.\\
\noindent Case 2)-2)-1) $\alpha^2+\beta^2-\delta^2-\gamma^2\neq 0$, $\alpha=\beta= 0$. By (3.11), we get the solution (ii).\\
\noindent Case 2)-2)-2) $\alpha^2+\beta^2-\delta^2-\gamma^2\neq 0$, $\gamma=\beta= 0$. By (3.11), we get $\delta=0$ and the solution (iii).\\
\end{proof}
\noindent {\bf Remark.} We give an example of (iv) in Theorem 3.2 now. Let $\beta=-1$ and $\gamma=2$, then $\delta=2\alpha$ and $\alpha+\delta
=3\alpha\neq 0$. $\alpha^2+\beta^2-\delta^2-\gamma^2=\frac{1}{\beta^2}(\alpha^2+\beta^2)(\beta^2-\gamma^2)\neq 0$. (3.12) becomes
$76\alpha^4-10\alpha^2-5=0$, so $\alpha^2=\frac{5+\sqrt{405}}{76}.$ By (3.5), we have $\lambda_1=-\frac{45+9\sqrt{405}}{76}$,
$\lambda_2=18\alpha^4-\frac{9}{2}\alpha^2-\frac{9}{4}.$\\
\vskip 0.5 true cm
By (2.6) and Lemma 4.3 in \cite{BO}, we have for $G_6$, there exists a pseudo-orthonormal basis $\{e_1,e_2,e_3\}$ with $e_3$ timelike such that the Lie
algebra of $G_6$ satisfies
\begin{equation}
[e_1,e_2]=\alpha e_2+\beta e_3,~~[e_1,e_3]=\gamma e_2+\delta e_3,~~[e_2,e_3]=0,~~\alpha+\delta\neq 0£¬~~\alpha\gamma-\beta\delta=0.
\end{equation}
\vskip 0.5 true cm
\begin{lem}(\cite{Ca},\cite{BO})
The Levi-Civita connection $\nabla$ of $G_6$ is given by
\begin{align}
&\nabla_{e_1}e_1=0,~~\nabla_{e_2}e_1=-\alpha e_2-\frac{\beta-\gamma}{2}e_3,~~\nabla_{e_3}e_1=\frac{\beta-\gamma}{2}e_2-\delta e_3,\\\notag
&\nabla_{e_1}e_2=\frac{\beta+\gamma}{2} e_3,~~\nabla_{e_2}e_2=\alpha e_1,~~\nabla_{e_3}e_2=-\frac{\beta-\gamma}{2}e_1,\\\notag
&\nabla_{e_1}e_3=\frac{\beta+\gamma}{2}  e_2,~~\nabla_{e_2}e_3=-\frac{\beta-\gamma}{2} e_1,~~\nabla_{e_3}e_3=-\delta e_1.
\notag
\end{align}
\end{lem}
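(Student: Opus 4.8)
The statement is quoted directly from \cite{Ca} and \cite{BO}, so the task is to recover the formulas (3.15) from the bracket relations (3.14) via the Koszul formula. The plan is as follows. Since $g$ is left-invariant and $\{e_1,e_2,e_3\}$ is a left-invariant frame, the functions $g(e_i,e_j)$ are constant, so the Koszul identity collapses to
\[
2g(\nabla_{e_i}e_j,e_k)=g([e_i,e_j],e_k)-g([e_j,e_k],e_i)+g([e_k,e_i],e_j).
\]
As $e_3$ is timelike we have $g(e_1,e_1)=g(e_2,e_2)=1$, $g(e_3,e_3)=-1$, with the $e_i$ mutually $g$-orthogonal, so each covariant derivative is recovered from
\[
\nabla_{e_i}e_j=g(\nabla_{e_i}e_j,e_1)\,e_1+g(\nabla_{e_i}e_j,e_2)\,e_2-g(\nabla_{e_i}e_j,e_3)\,e_3.
\]

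First I would record the non-zero brackets of (3.14), namely $[e_1,e_2]=\alpha e_2+\beta e_3$, $[e_1,e_3]=\gamma e_2+\delta e_3$ and $[e_2,e_3]=0$, together with their antisymmetric partners. Then, running over the ordered pairs $(i,j)$, I would substitute these brackets into the three terms on the right-hand side of the collapsed Koszul identity for each $k\in\{1,2,3\}$ and read off the coefficients of $\nabla_{e_i}e_j$. For example,
\[
2g(\nabla_{e_2}e_1,e_2)=g([e_2,e_1],e_2)-g([e_1,e_2],e_2)+g([e_2,e_2],e_1)=-\alpha-\alpha=-2\alpha,
\]
\[
2g(\nabla_{e_2}e_1,e_3)=g([e_2,e_1],e_3)-g([e_1,e_3],e_2)+g([e_3,e_2],e_1)=\beta-\gamma,
\]
and $g(\nabla_{e_2}e_1,e_1)=0$, which together give $\nabla_{e_2}e_1=-\alpha e_2-\tfrac{\beta-\gamma}{2}e_3$, in agreement with (3.15); the remaining eight covariant derivatives are obtained in exactly the same way.

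There is no genuine obstacle here, since the computation is entirely mechanical; the one point that requires care is the Lorentzian signature, namely that every occurrence of $e_3$ in the ``output'' slot carries a factor $-1$ coming from $g(e_3,e_3)=-1$, and a dropped sign there would corrupt several of the formulas. As a consistency check I would verify that the connection produced by this procedure is torsion-free, i.e. $\nabla_{e_i}e_j-\nabla_{e_j}e_i=[e_i,e_j]$ for all $i,j$ (so, e.g., $\nabla_{e_1}e_2-\nabla_{e_2}e_1=\alpha e_2+\beta e_3$), and metric compatible, i.e. $g(\nabla_{e_i}e_j,e_k)+g(e_j,\nabla_{e_i}e_k)=0$; both are immediate from (3.15) and confirm the formula. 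Alternatively, one may simply invoke the classification in \cite{Ca} and \cite{BO} cited in the statement.
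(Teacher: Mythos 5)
Your proposal is correct: the paper gives no proof of this lemma, merely citing \cite{Ca} and \cite{BO}, and the Koszul-formula computation you describe (with the metric constants $g(e_1,e_1)=g(e_2,e_2)=1$, $g(e_3,e_3)=-1$ forcing the sign flip on the $e_3$-component) is exactly the standard argument behind those references. Your sample computations for $\nabla_{e_2}e_1$ check out, and the torsion-free and metric-compatibility verifications you mention do confirm all nine formulas.
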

\vskip 0.5 true cm
By (2.1-2.3) and Lemma 3.3, we get (see P.150 in \cite{BO})
\begin{align}
{ \rho}^0=\left(\begin{array}{ccc}
-\alpha^2-\delta^2+\frac{(\beta-\gamma)^2}{2}&0&0\\
0&-\alpha^2-\alpha\delta+\frac{\beta^2-\gamma^2}{2}&0\\
0&0&-\delta^2-\alpha\delta-\frac{\beta^2-\gamma^2}{2}.
\end{array}\right)
\end{align}
By (2.5) and (3.15), we  have that $(G_6,g)$ is a $Ein(2)$ manifold if and only if
\begin{align}
\left\{\begin{array}{l}
\left[\alpha^2+\delta^2-\frac{(\beta-\gamma)^2}{2}\right]^2-\lambda_1[\alpha^2+\delta^2-\frac{(\beta-\gamma)^2}{2}]+\lambda_2=0,\\
\left[\alpha^2+\alpha\delta-\frac{\beta^2-\gamma^2}{2}\right]^2-\lambda_1[\alpha^2+\alpha\delta-\frac{\beta^2-\gamma^2}{2}]+\lambda_2=0,\\
-(\delta^2+\alpha\delta+\frac{\beta^2-\gamma^2}{2})^2+\lambda_1(\delta^2+\alpha\delta+\frac{\beta^2-\gamma^2}{2})+\lambda_2=0.\\
\end{array}\right.
\end{align}

\begin{thm}
$(G_6,g)$ is a $Ein(2)$ manifold if and only if \\
\noindent (i) $\gamma=\beta\neq 0$, $\alpha=\delta\neq 0$, $\lambda_1=2\alpha^2$, $\lambda_2=0$.\\
\noindent (ii) $\beta=\gamma=\delta=0$, $\alpha\neq 0$, $\lambda_1=\alpha^2$, $\lambda_2=0$.\\
\noindent (iii) $\gamma=\beta= 0$, $\alpha=\delta\neq 0$, $\lambda_1=2\alpha^2$, $\lambda_2=0$.\\
\noindent (iv) $\beta\neq \gamma$, $\delta=\gamma\neq 0$, $\alpha=\beta$, $\alpha+\delta\neq 0$, $\lambda_1=\frac{(\alpha+\delta)^2}{2}$,
$\lambda_2=0$.\\
\noindent (v) $\beta\neq \gamma$, $\delta=\gamma=0$, $\alpha\neq 0$, $\lambda_1=\frac{\alpha^4-\alpha^2\beta^2+\frac{\beta^4}{2}}{\alpha^2}$,
$\lambda_2=\frac{\beta^2(\alpha^2-\frac{\beta^2}{2})(\beta^2-\alpha^2)}{2\alpha^2}$.\\
\noindent (vi) $\beta\neq \gamma$, $\delta\neq \gamma$, $\delta=-\gamma\neq 0$, $\alpha=-\beta$, $\alpha+\delta\neq 0$, $\lambda_1=\frac{(\alpha+\delta)^2}{2}$,
$\lambda_2=0$.\\
\noindent (vii) $\beta\neq 0$, $\delta=\frac{\alpha\gamma}{\beta}$, $\delta^2-\alpha\delta+\beta\gamma-\gamma^2\neq 0$, $\alpha+\delta\neq 0$,
$\lambda_1=2\alpha^2+\delta^2+\alpha\delta+\beta\gamma-\beta^2$,
\begin{equation}
\lambda_2=(2\alpha^2+\delta^2+\alpha\delta+\beta\gamma-\beta^2)\left[\alpha^2+\delta^2-\frac{(\beta-\gamma)^2}{2}\right]-
\left[\alpha^2+\delta^2-\frac{(\beta-\gamma)^2}{2}\right]^2,
\end{equation}
\begin{equation}
\alpha^2=\frac{\gamma\beta(\beta-\gamma)\pm\sqrt{\gamma^2\beta^2(\gamma-\beta)^2+2\beta^3(\beta+\gamma)(\beta-\gamma)^2}}{2(\beta+\gamma)}.
\end{equation}
\noindent (viii) $\alpha=\beta=\gamma=0$, $\delta\neq 0$, $\lambda_1=\delta^2$, $\lambda_2=0$.\\
\noindent (viiii) $\alpha=\beta=0$, $\gamma\neq 0$, $\delta^2=\frac{\gamma^2}{2}$, $\lambda_1=\delta^2$, $\lambda_2=0$.\\
\end{thm}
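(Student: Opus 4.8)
The plan is to proceed exactly as for $G_1$–$G_5$. By (3.15) the Ricci operator $\rho^0$ of $G_6$ is diagonal in the distinguished basis, so every off-diagonal instance of (2.5) vanishes identically and the $Ein(2)$ condition reduces to the three polynomial equations (3.16) in the structure constants $\alpha,\beta,\gamma,\delta$ and the scalars $\lambda_1,\lambda_2$, to be solved under the standing hypotheses $\alpha+\delta\neq 0$ and $\alpha\gamma-\beta\delta=0$ coming from (3.13). It is convenient to abbreviate the diagonal entries, writing $A=\alpha^2+\delta^2-\tfrac{(\beta-\gamma)^2}{2}$, $B=\alpha^2+\alpha\delta-\tfrac{\beta^2-\gamma^2}{2}$ and $C=\delta^2+\alpha\delta+\tfrac{\beta^2-\gamma^2}{2}$, so that (3.16) reads $A^2-\lambda_1A+\lambda_2=0$, $B^2-\lambda_1B+\lambda_2=0$, $-C^2+\lambda_1C+\lambda_2=0$, and to record the identity $B+C=(\alpha+\delta)^2\neq 0$ for later use.

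Subtracting the second equation of (3.16) from the first eliminates $\lambda_2$ and gives
\[
\bigl(\delta^2-\alpha\delta+\beta\gamma-\gamma^2\bigr)\bigl(2\alpha^2+\delta^2+\alpha\delta+\beta\gamma-\beta^2-\lambda_1\bigr)=0,
\]
which separates the analysis into two branches. In the branch $\delta^2-\alpha\delta+\beta\gamma-\gamma^2=0$ (equivalently $A=B$) the first two equations of (3.16) coincide; subtracting the remaining pair and using $A+C=B+C=(\alpha+\delta)^2\neq 0$ forces $\lambda_1=(A^2+C^2)/(\alpha+\delta)^2$, and the first equation then gives $\lambda_2=\lambda_1A-A^2$, with no further restriction on the structure constants. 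Hence every $(\alpha,\beta,\gamma,\delta)$ on the locus $\{\,\delta^2-\alpha\delta+\beta\gamma-\gamma^2=0,\ \alpha\gamma=\beta\delta,\ \alpha+\delta\neq 0\,\}$ produces an $Ein(2)$ metric, and it remains to describe this locus: for $\beta\neq 0$ one has $\delta=\alpha\gamma/\beta$ and $A=B$ becomes $\gamma(\gamma-\beta)(\alpha^2-\beta^2)=0$, while for $\beta=0$ one has $\alpha\gamma=0$ and $A=B$ becomes $\delta(\delta-\alpha)=0$ or $\delta^2-\gamma^2=0$. Running through the sub-cases $\gamma=0$, $\gamma=\beta$, $\alpha=\pm\beta$, $\delta=0$, $\delta=\pm\gamma$ and substituting back into $\lambda_1,\lambda_2$ yields precisely families (i)–(vi).

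In the complementary branch one has $\lambda_1=2\alpha^2+\delta^2+\alpha\delta+\beta\gamma-\beta^2$, that is, $\lambda_1=A+B$; the first equation of (3.16) then fixes $\lambda_2=\lambda_1A-A^2$, which is the expression (3.17), the second equation becomes an identity, and the third equation reduces to the single polynomial constraint $-C^2+(A+B)C+AB=0$. I would split once more on $\beta$: for $\beta=0$ this constraint together with $\alpha\gamma=0$ and $\alpha+\delta\neq 0$ produces cases (viii) and (viiii) after a short computation; for $\beta\neq 0$ I substitute $\delta=\alpha\gamma/\beta$, clear denominators, divide out the factors that cannot vanish (powers of $\alpha$ and of $\alpha+\delta$, equivalently of $\beta+\gamma$), and am left with a quadratic equation in $\alpha^2$ whose two roots are exactly those displayed in (3.18), giving case (vii). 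Finally one verifies that in each recorded case all three equations of (3.16) hold and that the dead-end sub-branches (which collapse to $\alpha+\delta=0$, or contradict an earlier nonvanishing assumption) contribute nothing.

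I expect the main obstacle to be not any individual computation but the organisation of this case tree: one must keep scrupulous track of which factors — $\beta$, $\gamma$, $\beta-\gamma$, $\delta-\gamma$, $\beta+\gamma$, $\alpha+\delta$, and $\delta^2-\alpha\delta+\beta\gamma-\gamma^2$ — have been assumed nonzero, so that the hypotheses listed in (i)–(viiii) come out mutually exclusive and exhaustive and the spurious branches are correctly discarded. The one genuinely delicate step is the reduction in case (vii): the constraint $-C^2+(A+B)C+AB=0$ with $\delta=\alpha\gamma/\beta$ is a sizable polynomial, and one has to cancel exactly the right nonzero factors to bring it to the quadratic in $\alpha^2$, checking along the way that its discriminant — the quantity under the radical in (3.18) — factors as $\beta^2(\beta-\gamma)^2\bigl(2\beta^2+2\beta\gamma+\gamma^2\bigr)$ and is hence nonnegative, so that real solutions always exist.
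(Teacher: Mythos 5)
Your proposal follows essentially the same route as the paper: subtract the first two equations of (3.16) to obtain the factorization $(\delta^2-\alpha\delta+\beta\gamma-\gamma^2)(2\alpha^2+\delta^2+\alpha\delta+\beta\gamma-\beta^2-\lambda_1)=0$, solve for $\lambda_1,\lambda_2$ in each branch using $B+C=(\alpha+\delta)^2\neq 0$, and reduce the second branch to the quadratic (3.25) in $\alpha^2$; the case enumeration matches the paper's, and your extra observations (the first branch imposes no further constraint on the structure constants; the discriminant in (3.18) factors as $\beta^2(\beta-\gamma)^2(2\beta^2+2\beta\gamma+\gamma^2)\geq 0$) are correct refinements rather than a different method.
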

\begin{proof}
The first equation minusing the second equation in (3.16), then we get
\begin{equation}
(\delta^2-\alpha\delta+\beta\gamma-\gamma^2)(2\alpha^2+\delta^2+\alpha\delta+\beta\gamma-\beta^2-\lambda_1)=0.
\end{equation}
\noindent Case 1) $\delta^2-\alpha\delta+\beta\gamma-\gamma^2=0$. The second equation minusing the third equation in (3.16), then we get
\begin{equation}
\lambda_1=\frac{\left[\alpha^2+\alpha\delta-\frac{\beta^2-\gamma^2}{2}\right]^2+(\delta^2+\alpha\delta+\frac{\beta^2-\gamma^2}{2})^2}
{(\alpha+\delta)^2}.
\end{equation}
By (3.20) and the third equation in (3.16), we get
\begin{equation}
\lambda_2=\frac{(\alpha^2+\alpha\delta-\frac{\beta^2-\gamma^2}{2})(\delta^2+\alpha\delta+\frac{\beta^2-\gamma^2}{2})(\delta^2-\alpha^2+
\beta^2-\gamma^2)}
{(\alpha+\delta)^2}.
\end{equation}
By $\delta^2-\alpha\delta+\beta\gamma-\gamma^2=0$ and $\alpha\gamma=\beta\delta$, we get $(\beta-\gamma)(\delta^2-\gamma^2)=0$.\\
\noindent Case 1)-1) $\delta^2-\alpha\delta+\beta\gamma-\gamma^2=0$, $\beta=\gamma\neq 0$. Then $\alpha=\delta\neq 0$. By (3.20) and (3.21),
we get the solution (i).\\
\noindent Case 1)-2) $\delta^2-\alpha\delta+\beta\gamma-\gamma^2=0$, $\beta=\gamma= 0$. Then $\delta^2=\alpha\delta$.\\
\noindent Case 1)-2)-1) $\delta^2-\alpha\delta+\beta\gamma-\gamma^2=0$, $\beta=\gamma= 0$, $\delta=0$, By (3.20) and (3.21),
we get the solution (ii).\\
\noindent Case 1)-2)-2) $\delta^2-\alpha\delta+\beta\gamma-\gamma^2=0$, $\beta=\gamma= 0$, $\delta\neq 0$. Then $\alpha=\delta$. By (3.20) and (3.21), we get the solution (iii).\\
\noindent Case 1)-3) $\delta^2-\alpha\delta+\beta\gamma-\gamma^2=0$, $\beta\neq\gamma$. So $\delta^2=\gamma^2$. We let $\delta=\gamma\neq 0$.
By (3.20) and (3.21),
we get the solution (iv).\\
\noindent Case 1)-4) $\delta^2-\alpha\delta+\beta\gamma-\gamma^2=0$, $\beta\neq\gamma$, $\delta=\gamma=0$. By (3.20) and (3.21),
we get the solution (v).\\
\noindent Case 1)-5) $\delta^2-\alpha\delta+\beta\gamma-\gamma^2=0$, $\beta\neq\gamma$, $\delta=-\gamma$. By (3.20) and (3.21),
we get the solution (vi).\\
\noindent Case 2) $\delta^2-\alpha\delta+\beta\gamma-\gamma^2\neq 0$. By (3.19), we have
\begin{equation}
\lambda_1=2\alpha^2+\delta^2+\alpha\delta+\beta\gamma-\beta^2.
\end{equation}
By the first equation in (3.16), we get
\begin{equation}
\lambda_2=(2\alpha^2+\delta^2+\alpha\delta+\beta\gamma-\beta^2)\left[\alpha^2+\delta^2-\frac{(\beta-\gamma)^2}{2}\right]-
\left[\alpha^2+\delta^2-\frac{(\beta-\gamma)^2}{2}\right]^2.
\end{equation}
By the first equation and the third equation in (3.16), we get
\begin{equation}
(\alpha^2+\alpha\delta+\frac{\gamma^2-\beta^2}{2})
\left[\alpha^2+\delta^2-\frac{(\beta-\gamma)^2}{2}\right]=
(\delta^2+\alpha\delta+\frac{-\gamma^2+\beta^2}{2})[-2\alpha^2+\beta^2-\gamma^2+\frac{(\beta-\gamma)^2}{2}].
\end{equation}
\noindent Case 2)-1) $\delta^2-\alpha\delta+\beta\gamma-\gamma^2\neq 0$, $\beta\neq 0$. Then $\delta=\frac{\alpha\gamma}{\beta}$. By (3.24), we get
\begin{equation}
(\beta+\gamma)\alpha^4+\gamma\beta(\gamma-\beta)\alpha^2-\frac{\beta^3}{2}(\beta-\gamma)^2=0.
\end{equation}
So we get (3.18) and the solution (vii).\\
\noindent Case 2)-2) $\delta^2-\alpha\delta+\beta\gamma-\gamma^2\neq 0$, $\beta=0$. Then $\alpha\gamma=0$\\
\noindent Case 2)-2)-1) $\delta^2-\alpha\delta+\beta\gamma-\gamma^2\neq 0$, $\beta=0$, $\alpha=0$. By (3.24), we get $\gamma^2(\delta^2-\frac{\gamma^2}{2})=0$.\\
\noindent Case 2)-2)-1)-1) $\delta^2-\alpha\delta+\beta\gamma-\gamma^2\neq 0$, $\beta=0$, $\alpha=0$, $\gamma=0$. By (3.23), we get the solution (viii).\\
\noindent Case 2)-2)-1)-2) $\delta^2-\alpha\delta+\beta\gamma-\gamma^2\neq 0$, $\beta=0$, $\alpha=0$, $\gamma\neq 0$. So $\delta^2=\frac{\gamma^2}{2}$. By (3.22) and (3.23), we get the solution (viiii).\\
\noindent Case 2)-2)-2) $\delta^2-\alpha\delta+\beta\gamma-\gamma^2\neq 0$, $\beta=0$, $\alpha\neq 0$. So $\gamma=0$. By (3.24), we get
$\alpha=0$ and this is a contradiction. In this case, we have no solutions.
\end{proof}

\noindent {\bf Remark.} We give an example of (vii) in Theorem 3.4 now. Let $\beta=1$ and $\gamma=2$, then $\delta=2\alpha$ and
$\alpha^2=\frac{-2+\sqrt{10}}{6}$ and $\delta^2-\alpha\delta+\beta\gamma-\gamma^2=2\alpha^2-2\neq 0$. $\alpha+\delta=3\alpha\neq 0$,
$\lambda_1=\frac{4\sqrt{10}-5}{3}$, $\lambda_2=\frac{37-8\sqrt{10}}{12}$.\\

By (2.7) and Lemma 4.5 in \cite{BO}, we have for $G_7$, there exists a pseudo-orthonormal basis $\{e_1,e_2,e_3\}$ with $e_3$ timelike such that the Lie
algebra of $G_7$ satisfies
\begin{equation}
[e_1,e_2]=-\alpha e_1-\beta e_2-\beta e_3,~~[e_1,e_3]=\alpha e_1+\beta e_2+\beta e_3,~~[e_2,e_3]=\gamma e_1+\delta e_2+\delta e_3,,~~\alpha+\delta\neq 0,~~\alpha\gamma=0.
\end{equation}
\vskip 0.5 true cm
\begin{lem}(\cite{Ca},\cite{BO})
The Levi-Civita connection $\nabla$ of $G_7$ is given by
\begin{align}
&\nabla_{e_1}e_1=\alpha e_2+\alpha e_3,~~\nabla_{e_2}e_1=\beta e_2+(\beta+\frac{\gamma}{2})e_3,~~\nabla_{e_3}e_1=-(\beta-\frac{\gamma}{2})e_2-\beta e_3,\\\notag
&\nabla_{e_1}e_2=-\alpha e_1+\frac{\gamma}{2} e_3,~~\nabla_{e_2}e_2=-\beta e_1+\delta e_3,~~\nabla_{e_3}e_2=(\beta-\frac{\gamma}{2})e_1-\delta e_3,\\\notag
&\nabla_{e_1}e_3=\alpha e_1+\frac{\gamma}{2} e_2,~~\nabla_{e_2}e_3=(\beta+\frac{\gamma}{2})e_1+\delta e_2,~~\nabla_{e_3}e_3=-\beta e_1-\delta e_2.
\notag
\end{align}
\end{lem}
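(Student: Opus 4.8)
This lemma records the Levi-Civita connection of the left-invariant Lorentzian metric on $G_7$ whose bracket relations are given in (3.26), so the plan is to compute $\nabla$ directly from the Koszul formula. For left-invariant vector fields $X,Y,Z$ on a Lie group carrying a left-invariant metric $g$, every term in the Koszul formula that differentiates $g(\cdot,\cdot)$ along a left-invariant field vanishes, so the formula collapses to
\[
2g(\nabla_XY,Z)=g([X,Y],Z)-g([Y,Z],X)+g([Z,X],Y).
\]
Since $\{e_1,e_2,e_3\}$ is pseudo-orthonormal with $g(e_1,e_1)=g(e_2,e_2)=1$, $g(e_3,e_3)=-1$ and $g(e_i,e_j)=0$ for $i\neq j$, once the three scalars $g(\nabla_{e_i}e_j,e_k)$, $k=1,2,3$, are known one recovers $\nabla_{e_i}e_j=g(\nabla_{e_i}e_j,e_1)e_1+g(\nabla_{e_i}e_j,e_2)e_2-g(\nabla_{e_i}e_j,e_3)e_3$.

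The main step is to substitute the three brackets $[e_1,e_2]=-\alpha e_1-\beta e_2-\beta e_3$, $[e_1,e_3]=\alpha e_1+\beta e_2+\beta e_3$ and $[e_2,e_3]=\gamma e_1+\delta e_2+\delta e_3$ into the displayed identity for each ordered pair $(e_i,e_j)$ with $i,j\in\{1,2,3\}$ and read off the components. For instance, $2g(\nabla_{e_1}e_1,e_2)=-g([e_1,e_2],e_1)+g([e_2,e_1],e_1)=\alpha+\alpha$, so $g(\nabla_{e_1}e_1,e_2)=\alpha$; running through all pairs produces the nine formulas in the statement. The bookkeeping can be cut roughly in half by using torsion-freeness, $\nabla_{e_j}e_i=\nabla_{e_i}e_j-[e_i,e_j]$, and I would cross-check every sign against metric compatibility, which in this frame reads $g(\nabla_{e_k}e_i,e_j)+g(e_i,\nabla_{e_k}e_j)=0$.

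The only genuine difficulty is clerical rather than conceptual: one must carry the minus sign contributed by the timelike direction $e_3$ consistently through each of the $27$ scalar evaluations $2g(\nabla_{e_i}e_j,e_k)$ and make sure no term is dropped. Equivalently, and perhaps more safely, one can simply verify that the connection displayed above satisfies both $\nabla_{e_i}e_j-\nabla_{e_j}e_i=[e_i,e_j]$ and $g(\nabla_{e_k}e_i,e_j)+g(e_i,\nabla_{e_k}e_j)=0$ for all $i,j,k$, since these two conditions characterize the Levi-Civita connection uniquely; alternatively, the result is already contained in \cite{Ca} and \cite{BO}.
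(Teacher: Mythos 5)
Your proposal is correct: the paper offers no proof of its own for this lemma (it simply cites \cite{Ca} and \cite{BO}), and the Koszul-formula computation for left-invariant fields that you describe is exactly the standard derivation used in those references; spot-checking, e.g., $2g(\nabla_{e_1}e_2,e_3)=g([e_1,e_2],e_3)-g([e_2,e_3],e_1)+g([e_3,e_1],e_2)=\beta-\gamma-\beta=-\gamma$ correctly reproduces the $\frac{\gamma}{2}e_3$ term after accounting for $g(e_3,e_3)=-1$. Your closing suggestion to verify torsion-freeness and metric compatibility directly is a sound and equivalent alternative.
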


\vskip 0.5 true cm
By (2.1-2.3) and Lemma 3.5, we get (see P.151 in \cite{BO})
\begin{align}
{ \rho}^0=\left(\begin{array}{ccc}
-\frac{\gamma^2}{2}&0&0\\
0&\alpha\delta-\alpha^2-\beta\gamma+\frac{\gamma^2}{2}&-\alpha^2+\alpha\delta-\beta\gamma\\
0&\alpha^2-\alpha\delta+\beta\gamma&-\alpha\delta+\alpha^2+\beta\gamma+\frac{\gamma^2}{2}.
\end{array}\right)
\end{align}
By (2.5) and (3.28), we  have that $(G_7,g)$ is a $Ein(2)$ manifold if and only if
\begin{align}
\left\{\begin{array}{l}
\frac{\gamma^4}{4}-\lambda_1\frac{\gamma^2}{2}+\lambda_2=0,\\
(\alpha\delta-\alpha^2-\beta\gamma+\frac{\gamma^2}{2})^2-(\alpha^2-\alpha\delta+\beta\gamma)^2+
(\alpha\delta-\alpha^2-\beta\gamma+\frac{\gamma^2}{2})\lambda_1+\lambda_2=0,\\
(-\alpha\delta+\alpha^2+\beta\gamma+\frac{\gamma^2}{2})^2-(\alpha^2-\alpha\delta+\beta\gamma)^2+
(-\alpha\delta+\alpha^2+\beta\gamma+\frac{\gamma^2}{2})\lambda_1-\lambda_2=0,\\
(\alpha^2-\alpha\delta+\beta\gamma)(\lambda_1+\gamma^2)=0.\\
\end{array}\right.
\end{align}
\begin{thm}
$(G_7,g)$ is a $Ein(2)$ manifold if and only if \\
\noindent (i) $\alpha=\beta=\gamma=0$, $\delta\neq 0$, $\lambda_2=0$.\\
\noindent (ii) $\alpha=\gamma=0$, $\beta\neq 0$, $\delta\neq 0$, $\lambda_2=0$\\
\noindent (iii) $\alpha\neq 0$, $\gamma=0$, $\alpha=\delta$, $\lambda_2=0$\\
\noindent (iv) $\alpha\neq 0$, $\gamma=0$, $\alpha\neq\pm\delta$, $\lambda_1=\lambda_2=0$.\\
\end{thm}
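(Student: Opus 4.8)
The plan is to solve the polynomial system (3.30) by branching on the two structural constraints $\alpha\gamma=0$ and $\alpha+\delta\neq0$, exploiting the factorization $(\alpha^2-\alpha\delta+\beta\gamma)(\lambda_1+\gamma^2)=0$ in the fourth equation and the fact that the off-diagonal block of (3.28) makes the quadratic cross terms in the second and third equations cancel.

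First I would treat the case $\alpha\neq0$. Then $\alpha\gamma=0$ forces $\gamma=0$, and (3.28) degenerates: setting $u:=\alpha(\delta-\alpha)$ one has $\rho^0(e_1)=0$ while the $e_2,e_3$ rows of $\rho^0$ are $\pm u(e_2+e_3)$ (with the signs of (3.29)), so the squared cross terms in the second and third equations of (3.30) cancel. The first equation then gives $\lambda_2=0$; the second (equivalently the third) and the fourth both collapse to $u\lambda_1=0$, i.e. $\alpha(\delta-\alpha)\lambda_1=0$. Since $\alpha\neq0$, either $\delta=\alpha$, in which case $\rho^0=0$, any $\lambda_1$ is admissible, and we obtain (iii); or $\delta\neq\alpha$, which forces $\lambda_1=0$, and combined with $\alpha+\delta\neq0$ this is exactly (iv).

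Next I would treat $\alpha=0$, so that the standing constraint reads $\delta\neq0$. Adding the second and third equations of (3.30) annihilates the squared cross terms and yields $\gamma^2\bigl(\tfrac{\gamma^2}{2}+\lambda_1\bigr)=0$. If $\gamma=0$ then $\rho^0\equiv0$, the system reduces to $\lambda_2=0$ with $\lambda_1$ free, and splitting on $\beta=0$ versus $\beta\neq0$ records this as (i) and (ii). If $\gamma\neq0$, then $\lambda_1=-\tfrac{\gamma^2}{2}$, so the fourth equation becomes $\beta\gamma(\lambda_1+\gamma^2)=\tfrac12\beta\gamma^3=0$, whence $\beta=0$, and subtracting the third equation from the second forces $\lambda_2=0$. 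But then the first equation reads $\tfrac{\gamma^4}{4}-\lambda_1\tfrac{\gamma^2}{2}=\tfrac{\gamma^4}{2}\neq0$, a contradiction, so this subcase is empty.

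All the computations here are elementary linear algebra in the scalars $\lambda_1,\lambda_2$ once the branching is fixed; the only points demanding care are keeping track of which non-degeneracy conditions ($\delta\neq0$, $\delta\neq\alpha$, $\alpha+\delta\neq0$) survive in each branch so that the parameter lists in (i)--(iv) carry the correct hypotheses, and observing that in the branch $\alpha=\gamma=0$ the operator $\rho^0$ vanishes identically, so the $Ein(2)$ condition is simply $\lambda_2=0$ and the $\beta=0$ versus $\beta\neq0$ split is purely bookkeeping.
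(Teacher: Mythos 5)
Your proposal is correct and follows essentially the same route as the paper: a direct case analysis of the system (3.29), splitting on $\alpha=0$ versus $\alpha\neq 0$ via the constraint $\alpha\gamma=0$ and exploiting the factored fourth equation, with the empty subcase $\alpha=0$, $\gamma\neq 0$ eliminated by the same kind of contradiction (you extract $\lambda_1=-\gamma^2/2$ from summing the second and third equations rather than $\lambda_1=-\gamma^2$ from the fourth, but the conclusion is identical). The bookkeeping of the surviving non-degeneracy conditions in (i)--(iv) also matches the paper.
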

\begin{proof} We know $\alpha\gamma=0$.\\
\noindent Case 1) $\alpha=0$. By the fourth equation in (3.29), we have $\beta\gamma(\lambda_1+\gamma^2)=0.$\\
\noindent Case 1)-1) $\alpha=\beta=0$. By the second equation and the third equation in (3.29), we get $\lambda_2=0$.
By the first equation and the second equation in (3.29), we get $\gamma=0$. We get the solution (i).\\
\noindent Case 1)-2) $\alpha=0$, $\beta\neq 0$. Then $\gamma(\lambda_1+\gamma^2)=0$.\\
\noindent Case 1)-2)-1) $\alpha=0$, $\beta\neq 0$, $\gamma=0$. Then we get the solution (ii).\\
\noindent Case 1)-2)-2) $\alpha=0$, $\beta\neq 0$, $\gamma\neq 0$. Then $\lambda_1=-\gamma^2$ and $\lambda_2=-\frac{3}{4}\gamma^4$
by the first equation in (3.29). By the second equation in (3.29), we get $\gamma=0$. This is a contradiction.\\
\noindent Case 2) $\alpha\neq 0$. So $\gamma=0$. By the first equation in (3.29), we get $\lambda_2=0$. By the fourth equation in (3.29), we have $(\alpha-\delta)\lambda_1=0$.\\
\noindent Case 2)-1) $\alpha\neq 0$, $\gamma=0$, $\lambda_2=0$, $\alpha=\delta$. We get the solution (iii).\\
\noindent Case 2)-2) $\alpha\neq 0$, $\gamma=0$, $\lambda_2=0$, $\alpha\neq\delta$. Then $\lambda_1=0$ and we get the solution (iv).\\
\end{proof}

\vskip 1 true cm

\section{Acknowledgements}

The author was supported in part by NSFC No.11771070.

\vskip 1 true cm


\bigskip
\bigskip

\noindent {\footnotesize {\it Y. Wang} \\
{School of Mathematics and Statistics, Northeast Normal University, Changchun 130024, China}\\
{Email: wangy581@nenu.edu.cn}

\end{document}